\DeclarePairedDelimiter\floor{\lfloor}{\rfloor}
\newtheorem{theorem}{Theorem}[section]
\newtheorem{lemma}[theorem]{Lemma}
\newtheorem{corollary}[theorem]{Corollary}
\newtheorem{proposition}[theorem]{Proposition}
 \theoremstyle{definition}
 \newtheorem{definition}[theorem]{Definition}
 \newtheorem{remark}[theorem]{Remark}
 \newtheorem{example}[theorem]{Example}
\numberwithin{equation}{section}
\newcommand {\N}{\mathbb{N}} 
\newcommand {\Z}{\mathbb{Z}} 
\newcommand {\R}{\mathbb{R}} 
\newcommand{\T}{\mathbb{T}}
\newcommand{\FF}{\mathcal{F}}
\DeclareMathOperator{\M}{Mat}
\DeclareMathOperator{\Sym}{Sym}
\DeclareMathOperator{\Map}{Map}
\newcommand{\htopol}{{\text{\rm h}}_{\text{\rm top}}}
\newcommand{\cF}{{\mathcal F}}
\newcommand{\Nb}{{\mathbb N}}
\newcommand{\diam}{{\rm diam}}
\newcommand{\ev}{{\rm ev}}
\begin{document}
\title
[Expansive actions, sofic groups, and surjunctivity]{Expansive actions with specification of sofic groups, strong topological Markov property,
and surjunctivity}
\author{Tullio Ceccherini-Silberstein}
\address{Dipartimento di Ingegneria, Universit\`a del Sannio, 82100 Benevento, Italy}
\email{tullio.cs@sbai.uniroma1.it}
\author{Michel Coornaert}
\address{Universit\'e de Strasbourg, CNRS, IRMA UMR 7501, F-67000 Strasbourg, France}
\email{michel.coornaert@math.unistra.fr}
\author{Hanfeng Li}
\address{Department of Mathematics, SUNY at Buffalo, Buffalo, NY 14260-2900, USA}
\email{hfli@math.buffalo.edu}
\subjclass[2010]{37B40, 37B10, 37D20, 20F65}

\keywords{Sofic group, sofic entropy, surjunctive dynamical system,
expansive action, strong topological Markov property,
weak specification property, subshift, surjunctive subshift,
strongly irreducible subshift, algebraic dynamical system}

\begin{abstract}
A dynamical system is a pair $(X,G)$, where $X$ is a compact metrizable
space and $G$ is a countable group acting by homeomorphisms of $X$. An endomorphism
of $(X,G)$ is a continuous selfmap of $X$ which commutes with the action of $G$.
One says that a dynamical system $(X,G)$ is surjunctive provided that every injective
endomorphism of $(X,G)$ is surjective (and therefore is a homeomorphism).
We show that when $G$ is sofic, every expansive dynamical system $(X,G)$
with nonnegative sofic topological entropy and satisfying the weak specification
and the strong topological Markov properties, is surjunctive.
\end{abstract}
\date{\today}
\maketitle

\tableofcontents

\section{Introduction}
Let $A$ be a finite set, called the \emph{alphabet}, and let $G$ be a group.
We equip $A^G = \prod_{g \in G} A$ with the \emph{prodiscrete} topology, i.e., the product topology obtained by taking the discrete topology on each factor $A$ of $A^G$, and the \emph{shift} action of $G$, given by $gx(h) \coloneqq  x(g^{-1}h)$ for all
$g,h \in G$ and $x \in A^G$. This action is clearly continuous.
A closed $G$-invariant subset $X$ of $A^G$ is called a \emph{subshift} and a continuous map $f \colon X \to X$ which commutes with the
shift is called a \emph{cellular automaton} (cf.\ \cite{book}).
\par
A group $G$ is called \emph{surjunctive} if, given any finite alphabet $A$,  every injective cellular automaton $f \colon A^G \to A^G$ is surjective (and therefore a homeomorphism) (see \cite[Chapter 3]{book}). Lawton~\cite{lawton} proved that all residually finite groups are surjunctive and Gottschalk~\cite{gottschalk} raised the question of determining those groups which are surjunctive. The statement asserting that every group is surjunctive is now commonly known as the \emph{Gottschalk conjecture}. The fact that all amenable groups are surjunctive follows from one of the implications of the Garden of Eden theorem established in~\cite{ceccherini} (see also \cite[Chapter 5]{book}).
Up to now, the largest class of groups that are known to be surjunctive is the class of \emph{sofic} groups introduced by
Gromov in~\cite{gromov-esav} and named by Weiss in~\cite{weiss-sgds} (see Section \ref{s:sofic}): surjunctivity of sofic groups was established by Gromov~\cite{gromov-esav} and Weiss~\cite{weiss-sgds} (see also~\cite{capraro-lupini}, \cite{book}, \cite{kerr-li} for expositions of this result).
The class of sofic groups contains all residually finite groups, all amenable groups, and, more generally, all residually amenable groups.
Actually, no example of a non-sofic group has yet been found.
\par
A \emph{dynamical system} is a pair $(X,G)$ consisting of a compact metrizable space $X$ equipped with a continuous action of a countable group $G$.
An \emph{endomorphism} of a dynamical system $(X,G)$ is a continuous $G$-equivariant map $f \colon X \to X$.
\par
For instance, if $A$ is a finite set, $G$ is a countable group, and $X \subset   A^G$ is a subshift, then $(X,G)$, where $X$ is equipped with the topology induced by the prodiscrete topology and the action of $G$ obtained by restriction of the shift action, is a dynamical system whose endomorphisms are the cellular automata $f \colon X \to X$.
\par
One says that a dynamical system $(X,G)$ is \emph{surjunctive} if every injective endomorphism of $(X,G)$ is surjective (and hence a homeomorphism). Also $(X,G)$ is \emph{expansive} if, given a compatible metric $\rho$ on $X$, there exists a constant $c > 0$ such that for all distinct $x,y \in X$ there exists $g \in G$ such that $\rho(gx,gy) > c$. By  compactness of $X$, the fact that the dynamical system
$(X,G)$ is expansive or not does not depend on the choice of the compatible  metric $\rho$. For instance, if $X \subset   A^G$ is a subshift, then $(X,G)$ is expansive.
See Section \ref{s:exp}.
\par
A \emph{sofic approximation} of a countable group $G$ is a sequence $\Sigma = (d_j, \sigma_j)_{j \in \N}$, where $d_j$ is a positive integer and $\sigma_j \colon G \to \Sym(d_j)$ is a map from $G$ into the symmetric group of degree $d_j$, satisfying the following conditions:

(i) $\lim_{j \to \infty} d_j = + \infty$,

(ii) $\lim_{j \to \infty} \eta_{d_j}(\sigma_j(s)\sigma_j(t), \sigma_j(st)) = 0$ for all $s,t \in G$,
and

(iii) $\lim_{j \to \infty} \eta_{d_j}(\sigma_j(s), \sigma_j(t)) = 1$ for all distinct $s,t \in G$,

\noindent
where $\eta_{d_j}$ denotes the normalized \emph{Hamming distance} on $\Sym(d_j)$.
A countable group $G$ is \emph{sofic} if it admits a sofic approximation. More generally, a group is \emph{sofic} if all of its
finitely generated subgroups are sofic.
\par
For sofic groups, David Kerr and the third named author \cite{{kerr-li-0}, kerr-li, kerr-li-book}
defined and studied the \emph{topological entropy} $h_\Sigma(X,G)$ of a dynamical system $(X,G)$ relative to a sofic approximation $\Sigma$ of the group $G$. See Section \ref{s:sofic-top-entropy}.
\par
The notion of \emph{specification}, a strong orbit tracing property, was introduced
by Rufus Bowen for $\Z$-actions in relation to his studies on Axiom A diffeomorphisms in \cite{bowen-periodic-1971} (see
also \cite[Definition 21.1]{denker-grillenberger-sigmund}) and was subsequently extended to $\Z^d$-actions by Ruelle in
\cite{ruelle-statistical-1973}. Several versions and generalizations of specification have appeared in the literature
(see, in particular, \cite[Definition 5.1]{lind-schmidt} and \cite[Definition 6.1]{chung-li-2015}).
Here is the one we need (cf.~\cite[Definition 6.1]{chung-li-2015}, see also \cite{Li19}).
A dynamical system $(X,G)$ has the \emph{weak specification property} if for any $\varepsilon > 0$ there exists a
 finite symmetric subset $F \subset   G$ containing $1_G$ satisfying the following property: if $(\Omega_i)_{i \in I}$ is any finite family
of finite subsets of $G$ such that $F \Omega_i \cap \Omega_j = \varnothing$ for all distinct $i,j \in I$, and $(x_i)_{i \in I}$
is any family of points in $X$, then there exists $x \in X$ such that $\rho(sx,sx_i) \leq \varepsilon$ for all $i \in I$ and
$s \in \Omega_i$, where $\rho$ is any metric compatible with the topology on $X$.
When $X \subset   A^G$ is a subshift, the dynamical system $(X,G)$ has the weak specification property if and only if $X$ is
\emph{strongly irreducible} (cf.\ \cite{csc-myhill-monatsh}, \cite[Proposition A.1]{Li19}). See Section \ref{s:wsp}.
\par
In \cite{BGL}, Barbieri, Garc\'\i a-Ramos, and the third named author, inspired by investigations in the context of symbolic dynamics
\cite{BGMT}, introduced and studied various notions of topological Markov properties. Among them, the following will play a crucial role in our work.  A dynamical system  $(X,G)$ satisfies the \emph{strong topological Markov property} (briefly, \emph{sTMP}) if for any $\varepsilon > 0$ there exists $\delta > 0$ and a finite subset $F \subset   G$ containing $1_G$ such that the following holds:
given any finite subset $A \subset   G$ and $x,y \in X$ such that $\rho(sx,sy) \leq \delta$ for all $s \in FA \setminus A$,
there exists $z \in X$ satisfying that $\rho(sz,sx) \leq \varepsilon$ for all $s \in FA$ and $\rho(sz,sy) \leq \varepsilon$ for all
$s \in G \setminus A$.
When $X \subset   A^G$ is a subshift, the strong topological Markov property for $(X,G)$ is equivalent to $X$ being \emph{splicable}
in the sense of Gromov \cite[Section 8.C]{gromov-esav} (see also \cite[Section 3.8]{CFS}).
For instance, every subshift \emph{of finite type} is splicable. See Section \ref{s:stmp}.
\par
We are now in a position to state the main result of the paper.

\begin{theorem}
\label{t:main}
Let $G$ be a countable sofic group and let $(X,G)$ be an expansive dynamical system with the weak specification property and the strong topological Markov property. Assume that $h_\Sigma(X,G) \geq 0$ for some sofic approximation $\Sigma$ of $G$. Then
$(X,G)$ is surjunctive.
\end{theorem}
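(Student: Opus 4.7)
The strategy is proof by contradiction through entropy arithmetic. Suppose $f \colon X \to X$ is an injective endomorphism with $f(X) \neq X$, and set $Y \coloneqq f(X)$. Then $Y$ is a proper closed $G$-invariant subset of $X$, and $f$ restricts to a topological conjugacy $(X,G) \to (Y,G)$, so $h_\Sigma(Y,G) = h_\Sigma(X,G)$. The hypothesis $h_\Sigma(X,G) \geq 0$ guarantees that this common value is not $-\infty$, which is essential for the counting argument to be meaningful. The plan is then to produce enough additional approximating maps along $\sigma_j$ which stay far from $Y$ to force $h_\Sigma(X,G) > h_\Sigma(Y,G)$, yielding the contradiction.

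To set up the construction, fix a compatible metric $\rho$ on $X$ and choose an obstruction point $x_0 \in X \setminus Y$. Expansiveness produces a finite symmetric set $F_0 \subset G$ containing $1_G$ and an $\varepsilon_0 > 0$ such that for every $y \in Y$ there is some $s \in F_0$ with $\rho(sx_0, sy) > 3\varepsilon_0$. Feeding $\varepsilon_0$ into the weak specification property and into sTMP produces a single larger finite symmetric set $F_1 \supset F_0$ and a $\delta_1 > 0$ governing orbit-tracing and Markov-stitching simultaneously; these are held fixed for the remainder of the argument.

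For each sufficiently large $j$, take a maximum $(F_0, 2\varepsilon_0)$-separated family $\{\varphi_\alpha\}$ of approximating maps $\{1,\dots,d_j\} \to Y$ in the sense used to compute $h_\Sigma(Y,G)$; this family has cardinality at least $\exp(h_\Sigma(Y,G)\, d_j - o(d_j))$. For each $\alpha$, I would select a subset $S_\alpha \subset \{1,\dots,d_j\}$ whose $\sigma_j(F_1 F_1^{-1})$-translates are pairwise disjoint; a greedy packing argument using only property (ii) of the sofic approximation yields such $S_\alpha$ with $|S_\alpha| \geq d_j/(2|F_1|^2)$ for all large $j$. For each subset $T \subset S_\alpha$, weak specification applied at the sites in $T$ (to the constant pattern $x_0$) and at the remaining sites (to the pattern of $\varphi_\alpha$), combined with sTMP used locally at each insertion site, produces a new approximating map $\varphi_{\alpha,T} \colon \{1,\dots,d_j\} \to X$ satisfying $\rho(\varphi_{\alpha,T}(k), x_0) \leq \varepsilon_0$ for $k \in T$ and $\rho(\varphi_{\alpha,T}(k), \varphi_\alpha(k)) \leq \varepsilon_0$ for $k \notin \sigma_j(F_1)T$. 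By the choice of $F_0$ and $\varepsilon_0$, distinct pairs $(\alpha, T)$ yield $(F_0, \varepsilon_0)$-separated outputs, giving at least $\exp(h_\Sigma(Y,G)\, d_j - o(d_j)) \cdot 2^{d_j/(2|F_1|^2)}$ separated approximations of $X$. Passing to the entropy limit yields $h_\Sigma(X,G) \geq h_\Sigma(Y,G) + (2|F_1|^2)^{-1} \log 2$, contradicting the equality.

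The main obstacle is the joint construction of $\varphi_{\alpha,T}$ and the verification that $(\alpha, T) \mapsto \varphi_{\alpha,T}$ is genuinely separation-preserving. Weak specification alone provides only $\varepsilon$-tracing, so without further input the inserted value at a site $k \in T$ could drift toward a value consistent with $\varphi_\alpha(k)$ and thereby destroy the record of $T$; the strong topological Markov property is precisely what localizes each insertion and forces the value at $k \in T$ to stay pinned near $x_0$ regardless of the surrounding $\varphi_\alpha$-pattern. Balancing $\varepsilon_0$, $\delta_1$, $|F_1|$, the spreading density of $S_\alpha$, and the defects in $\sigma_j$ while maintaining the approximate equivariance required of $\varphi_{\alpha,T}$ is the delicate technical heart of the proof.
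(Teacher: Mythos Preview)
Your overall strategy matches the paper's: reduce surjunctivity to the entropy-drop statement (the paper's Proposition~\ref{p:main}), pick $x_0 \in X\setminus Y$, and for each $Y$-approximating map $\varphi$ splice in copies of $x_0$ along a sparse set of sites using weak specification together with (uniform) sTMP, then count the resulting $X$-approximating maps. The gap is in your separation claim. You argue that sTMP lets you recover $T$ from $\varphi_{\alpha,T}$, but you do not explain how to recover $\alpha$ when $T_1=T_2=T$. The single site $k_0$ witnessing the $\rho_\infty$-separation of $\varphi_{\alpha_1}$ and $\varphi_{\alpha_2}$ may lie in the buffer zone $\sigma_j(F_1)T$, where your construction gives no control on $\varphi_{\alpha_i,T}(k_0)$; in that case the spliced maps need not be separated, and your count $\exp(h_\Sigma(Y,G)\,d_j)\cdot 2^{|S_\alpha|}$ is not justified.

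The paper repairs this with a pigeonhole step you omit. Fixing a finite $(\rho,\varepsilon)$-net $Z\subset X$, for each subset $B$ of the insertion lattice $\Lambda$ one passes to the subfamily $\Phi_B$ of $Y$-maps that land in the same $Z$-cells on the buffer $\sigma(F_1F_2^2)B$, paying a factor $|Z|^{-|F_1F_2^2|\cdot|B|}$. Any two maps in $\Phi_B$ are then forced to be $\rho_\infty$-separated \emph{outside} the buffer, so their spliced versions remain separated. The count becomes $\sum_{B\subset\Lambda}|\Phi_B|\geq|\Phi|\,e^{-2\kappa d}(1+|Z|^{-|F_1F_2^2|})^{|\Lambda|}$, which is weaker than your claimed $|\Phi|\cdot 2^{|\Lambda|}$ but still an exponential gain and hence yields $h_\Sigma(X,G)>h_\Sigma(Y,G)$. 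Two secondary points: the paper uses a single insertion lattice $\Lambda$ rather than your $\alpha$-dependent $S_\alpha$, which also cleans up the $T_1\neq T_2$ case; and the separation of $x_0$ from $Y$ comes simply from $Y$ being closed and $x_0\notin Y$, not from expansiveness---expansiveness enters instead via uniform expansivity (Lemma~\ref{l:uniform-exp}) to verify that the spliced maps actually lie in $\Map(X,\rho,K,\delta,\sigma)$.
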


The following result, which is of independent interest, is our key ingredient for proving Theorem~\ref{t:main}:

\begin{proposition}
\label{p:main}
Let $G$ be a countable sofic group and let $(X,G)$ be an expansive dynamical system with the weak specification property and the strong topological Markov property. Assume that $h_\Sigma(X,G) \geq 0$ for some sofic approximation $\Sigma$ of $G$.
Suppose that $Y \subset X$ is a proper closed $G$-invariant subset. Then $h_\Sigma(Y,G) < h_\Sigma(X,G)$.
\end{proposition}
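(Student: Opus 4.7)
The plan is to produce, from each microstate for $Y$, exponentially many $(\rho,\eta)$-separated microstates for $X$, thus obtaining a strict gap between the two sofic topological entropies. The engine is a \emph{perturbation by insertion}: inside each microstate for $Y$, I splice in a positive-density pattern of ``$x_0$-like'' coordinates, for a fixed point $x_0\in X\setminus Y$, using the weak specification property to realize the desired hybrid local data as a genuine $G$-orbit and the sTMP to guarantee the required local gluing.

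First, I would exploit $Y\subsetneq X$: by expansivity and compactness of $Y$, pick $x_0\in X\setminus Y$, a finite symmetric $K\subset G$ with $1_G\in K$, and $\eta>0$ such that $\max_{s\in K}\rho(sx_0,sy)>3\eta$ for every $y\in Y$. Fix $\varepsilon\in(0,\eta)$ small; apply the weak specification property to obtain a finite symmetric $F\ni 1_G$, and apply the sTMP to obtain a finite $F'\ni 1_G$ and $\delta>0$; enlarge both so that $K\subset F\cap F'$. For each sofic approximation map $\sigma_j\colon G\to\Sym(d_j)$, pick by a greedy packing argument a subset $\Omega_j\subset\{1,\dots,d_j\}$ of size at least $c\,d_j$ (for some $c>0$ depending only on $|F|$ and $|F'|$) whose $\sigma_j(FF'F'F)$-translates are pairwise disjoint; this is standard and uses only the approximate equivariance of $\sigma_j$.

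Next, given a microstate $\psi_j\colon\{1,\dots,d_j\}\to Y$ for $Y$ and any $S\subseteq\Omega_j$, I construct a microstate $\varphi_{\psi_j,S}\colon\{1,\dots,d_j\}\to X$ for $X$ as follows. Invoke weak specification to produce a point $z_{\psi_j,S}\in X$ whose local view at each coordinate $i\in S$ matches that of $x_0$ within $\varepsilon$ on the window $K$, while its local view elsewhere matches that of $\psi_j(i)$ within $\varepsilon$ on the relevant window; use sTMP in the annular regions between the two regimes to certify that these constraints can be met simultaneously. Read off $\varphi_{\psi_j,S}(i)$ from $z_{\psi_j,S}$ in an appropriately localized way, so that the output satisfies the approximate equivariance required of a microstate. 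The disjointness of the $\sigma_j(FF'F'F)$-translates of $\Omega_j$ and the approximate multiplicativity of $\sigma_j$ ensure that the defects stay small enough.

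Finally, I combine and count: for fixed $\psi_j$ the choice of $K$ and $\eta$ forces $\varphi_{\psi_j,S}$ and $\varphi_{\psi_j,S'}$ to differ by more than $\eta$ at some $K$-translate of a point of $S\triangle S'$, so that $\{\varphi_{\psi_j,S}:S\subseteq\Omega_j\}$ is $(\rho_\infty,\eta)$-separated, producing $2^{|\Omega_j|}\geq 2^{c\,d_j}$ microstates for $X$ from each microstate for $Y$. Taking $\frac{1}{d_j}\log$ and letting $j\to\infty$ gives $h_\Sigma(X,G)\geq h_\Sigma(Y,G)+c\log 2$ in the case $h_\Sigma(Y,G)\geq 0$, and the hypothesis $h_\Sigma(X,G)\geq 0$ handles the case $h_\Sigma(Y,G)=-\infty$ trivially. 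The hardest step will be the third paragraph: verifying in the sofic setting, with $\sigma_j$ only approximately multiplicative, that the inserted hybrid pattern extends to an actual point of $X$ and that the resulting localized reading is a genuine microstate with the quantitative defects required. It is there that weak specification, sTMP, and the sofic approximation interact most delicately.
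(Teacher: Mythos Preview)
Your overall strategy---splice an $x_0$-pattern into $Y$-microstates at a positive-density set of well-separated coordinates, then count subsets---matches the paper's. But two genuine gaps need to be closed.

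\textbf{The construction is carried out at the wrong level.} Weak specification and sTMP are properties of the $G$-action on $X$: they take finitely many points of $X$ as input and output a single point of $X$. A microstate $\psi_j\colon[d_j]\to Y$ is not a point of $X$, and there is no single $z_{\psi_j,S}\in X$ whose ``local view at each coordinate $i$'' can match $\psi_j(i)$---a point of $X$ has no coordinates in $[d_j]$. The paper works \emph{coordinate by coordinate}: first, using weak specification once and for all, it produces for every $y\in Y$ a point $z_y\in X$ that is $\varepsilon$-close to $x_0$ at $1_G$ and $\varepsilon$-close to $y$ outside a fixed window $F_2^2$. Then, for each $a\in[d]$, it applies the \emph{uniform} sTMP to the single point $\varphi(a)\in Y$, splicing in the translates $s^{-1}z_{\varphi(\sigma_s a)}$ for those $s$ in a fixed window with $\sigma_s a\in B$; this yields a point $\bar\varphi_B(a)\in X$, and $a\mapsto\bar\varphi_B(a)$ is the new microstate. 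Verifying approximate equivariance of $\bar\varphi_B$ is the delicate part (the paper's Claim~1) and requires an auxiliary window $K_1$ coming from uniform expansivity.

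\textbf{The counting is incomplete.} You establish $(\rho_\infty,\eta)$-separation of $\{\varphi_{\psi_j,S}:S\subseteq\Omega_j\}$ only for a \emph{fixed} $\psi_j$, then conclude $h_\Sigma(X,G)\ge h_\Sigma(Y,G)+c\log 2$. That inference needs the whole family $\{\varphi_{\psi_j,S}:\psi_j\in\Psi,\ S\subseteq\Omega_j\}$ to be separated, where $\Psi$ is a maximal separated set of $Y$-microstates. When $S=S'$ but $\psi_j\neq\psi_j'$, the unique coordinate witnessing $\rho_\infty(\psi_j,\psi_j')\ge c/2$ may lie inside the insertion region $\sigma(F_1F_2^2)S$, and then both modified microstates look like $x_0$ there---the separation has been erased. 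The paper repairs this with a pigeonhole refinement: for each insertion set $B$, pass to a subset $\Phi_B\subset\Psi$ whose members all agree (to within $\varepsilon$, via a fixed net $Z\subset X$) on $\sigma(F_1F_2^2)B$, at a cost of $|Z|^{-|F_1F_2^2|\cdot|B|}$. Any two distinct $\varphi,\psi\in\Phi_B$ are then forced to be $(\rho_\infty,c/2)$-separated \emph{outside} the insertion region, where the modification moves points by at most $\varepsilon$, so $\bar\varphi_B,\bar\psi_B$ stay $\varepsilon$-separated. Summing over $B\subset\Lambda$ gives the factor $(1+|Z|^{-|F_1F_2^2|})^{|\Lambda|}$, and since $|\Lambda|\ge d/(2|F_1F_2^2|^2)$ this still produces a strictly positive gap.
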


As an immediate consequence of Theorem \ref{t:main} we deduce the following:

\begin{corollary}
\label{c:one}
Let $A$ be a finite set and let $G$ be a countable sofic group.
Suppose that $X \subset A^G$ is a splicable (e.g., of finite type) strongly irreducible subshift.
Assume that $h_\Sigma(X,G) \geq 0$ for some sofic approximation $\Sigma$ of $G$.
Then $(X,G)$ is surjunctive. \qed
\end{corollary}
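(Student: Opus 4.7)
The plan is to deduce Corollary \ref{c:one} as a direct consequence of Theorem \ref{t:main} by translating each symbolic-dynamics hypothesis on the subshift $X \subset A^G$ into the corresponding hypothesis on the dynamical system $(X,G)$, and then invoking the theorem.

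Expansivity is automatic. When $X \subset A^G$ is a subshift and $A$ is finite, the prodiscrete topology restricts to $X$ as a compact metrizable topology for which the $G$-action is expansive; this is the standard fact flagged in the introduction and developed in Section \ref{s:exp}. The remaining two structural hypotheses translate via equivalences recorded in the introduction: strong irreducibility of $X$ is equivalent to $(X,G)$ having the weak specification property (cf.\ \cite{csc-myhill-monatsh} and \cite[Proposition A.1]{Li19}), and splicability of $X$ is equivalent to $(X,G)$ satisfying the strong topological Markov property (cf.\ \cite[Section 8.C]{gromov-esav} and \cite[Section 3.8]{CFS}). The parenthetical clause ``e.g., of finite type'' is covered by the observation, recorded in the introduction, that every subshift of finite type is splicable. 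Finally, the entropy condition $h_\Sigma(X,G) \geq 0$ for some sofic approximation $\Sigma$ is assumed verbatim.

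All hypotheses of Theorem \ref{t:main} are therefore satisfied, whence $(X,G)$ is surjunctive. As recalled in the introduction, the endomorphisms of the dynamical system associated to a subshift are exactly the cellular automata $f \colon X \to X$, so the surjunctivity of $(X,G)$ says precisely that every injective cellular automaton $X \to X$ is surjective, which is the conclusion of the corollary. There is no genuine obstacle here, since all of the substantive content has been packaged into Theorem \ref{t:main} (itself resting on Proposition \ref{p:main}); the role of the corollary is to repackage that content in the familiar symbolic-dynamics dictionary.
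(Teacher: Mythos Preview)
Your proposal is correct and matches the paper's approach exactly: the paper states Corollary~\ref{c:one} as an immediate consequence of Theorem~\ref{t:main} (hence the \qed\ after the statement), relying precisely on the dictionary entries you spell out---expansivity of subshifts, Proposition~\ref{p:wSP} for weak specification $\Leftrightarrow$ strong irreducibility, and Proposition~\ref{p:sTMP} for sTMP $\Leftrightarrow$ splicability (with the finite-type case subsumed). There is nothing to add.
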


As the full shift $A^G$ is (trivially) splicable (in fact of finite type) and strongly irreducible, 
and $h_\Sigma(A^G,G) = \log |A| \geq 0$ for all sofic approximations $\Sigma$ of any sofic group $G$,
from Corollary \ref{c:one}
one immediately recovers the Gromov-Weiss surjunctivity theorem for sofic groups.
When $G$ is an amenable group, the surjunctivity result in Corollary \ref{c:one} was proved under the additional assumption of $X$ being of finite type by Fiorenzi \cite[Corollary 4.8]{fiorenzi-strongly}; 
the first two named authors \cite{csc-myhill-monatsh} then showed that the strong irreduciblity of the subshift is sufficient to imply surjunctivity when $G$ is amenable. Note that, however, the strong irreducibility condition cannot be dropped: for instance, Weiss \cite{weiss-sgds} gave an example of a subshift of finite type over $G = \Z$ which is not surjunctive (see Example \ref{ex:weiss}) as well as an example of a topologically mixing subshift of finite type over  $G = \Z^2$ which is not surjunctive either.

An \emph{algebraic dynamical system} is a dynamical system $(X,G)$, where $X$ is a compact metrizable abelian group and $G$ is a countable
group acting by continuous automorphisms of $X$. Given an algebraic dynamical system $(X,G)$, the Pontryagin dual $\widehat{X}$, when
equipped with the dual action of $G$, is a countable left $\Z[G]$-module (see Section \ref{s:algebraic-actions}).
In fact, Pontryagin duality establishes a bijective correspondence between algebraic dynamical systems over $G$
and countable left $\Z[G]$-modules. 

An algebraic dynamical system $(X,G)$ is said to be \emph{finitely presented} provided its
Pontryagin dual $\widehat{X}$ is a finitely presented left $\Z[G]$-module.
This amounts to saying that there exists an $n \times m$ matrix $A$ with coefficients in $\Z[G]$ such that
$\widehat{X} = \Z[G]^m/(\Z[G]^n A)$; we then denote $X$ by $X_A$.
In the particular case when $A$ is a square matrix,
the associated algebraic dynamical system $(X_A,G)$ is called a \emph{generalized principal algebraic dynamical system}.
Note that if $(X,G)$ is an algebraic dynamical system, then $G$ fixes the identity element of $X$.
Therefore, if in addition the group $G$ is sofic, then $(X,G)$ satisfies $h_\Sigma(X,G) \geq 0$ for all 
sofic approximations $\Sigma$ of $G$.
From Theorem \ref{t:main}, in combination with results of Meyerovitch \cite{Meyerovitch}, Ren \cite{Ren}, and
Barbieri, Garc\'\i a-Ramos and the third named author \cite{BGL}, we then deduce the following:

\begin{corollary}
\label{c:alg-finitely-presented}
Let $G$ be a countable sofic group. Let $A \in \M_n(\Z[G])$ and suppose that $A$ is invertible in $\M_n(\ell^1(G))$.
Then the generalized principal algebraic dynamical system $(X_A,G)$ is surjunctive.
\end{corollary}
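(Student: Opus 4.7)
The plan is a direct verification that $(X_A, G)$ satisfies the hypotheses of Theorem \ref{t:main}, followed by an immediate application of that theorem. Since $G$ is countable sofic by assumption, I need only check four things: expansiveness of $(X_A,G)$, the weak specification property, the strong topological Markov property, and $h_\Sigma(X_A, G) \geq 0$ for some sofic approximation $\Sigma$.

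The entropy nonnegativity is in fact flagged in the paragraph just before the corollary: since $G$ acts on the compact metrizable abelian group $X_A$ by continuous automorphisms, the identity element of $X_A$ is a fixed point, so the one-point orbit is a closed $G$-invariant subset and $h_\Sigma(X_A, G) \geq 0$ for every sofic approximation $\Sigma$ of $G$. This step requires no further input.

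The remaining three conditions all rest on the $\ell^1$-invertibility hypothesis on $A$. First, expansiveness of $(X_A, G)$ under $\ell^1$-invertibility of $A$ is classical and holds in the full generality of countable groups; the inverse matrix in $\M_n(\ell^1(G))$ provides the effective bound that forces distinct points to separate under the action. Second, the weak specification property for $(X_A,G)$ under the same hypothesis is the content of work of Meyerovitch \cite{Meyerovitch} and Ren \cite{Ren}: the $\ell^1$-inverse of $A$ supplies an explicit gluing procedure that realizes approximate orbit patches as genuine points of $X_A$. Third, the strong topological Markov property for $(X_A, G)$ under $\ell^1$-invertibility of $A$ is established by Barbieri, García-Ramos, and the third-named author in \cite{BGL}, who systematically studied topological Markov-type properties for algebraic actions precisely in this setting.

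With all four hypotheses verified, Theorem \ref{t:main} applies directly and yields surjunctivity of $(X_A, G)$. There is no substantial obstacle here beyond invoking the correct references: the corollary is a packaging statement, whose real content is the observation that $\ell^1$-invertibility of the defining matrix $A$ is exactly the hypothesis that simultaneously guarantees the three dynamical conditions (expansiveness, weak specification, sTMP) required by Theorem \ref{t:main}, while the fourth condition is automatic for algebraic actions. The only care needed is to ensure that the cited results from \cite{Meyerovitch}, \cite{Ren}, and \cite{BGL} are indeed formulated in sufficient generality for arbitrary countable (sofic) $G$ and for generalized principal systems $X_A$ with $A$ square but not necessarily diagonal; this is the point at which one should double-check the precise statements in the literature rather than take them on faith.
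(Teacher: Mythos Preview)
Your approach is essentially the same as the paper's: verify the hypotheses of Theorem~\ref{t:main} by citing the literature, then apply. One correction on attribution, since you explicitly flag this as a point to check: in the paper's proof, Ren~\cite{Ren} is the source for weak specification (proved for the principal case, with the same argument extending to $n\times n$), while Meyerovitch~\cite{Meyerovitch} is cited not for weak specification but for the pseudo-orbit tracing property of $(X_A,G)$ when $A$ is invertible in $\M_n(\ell^1(G))$; the strong topological Markov property then follows via the general implication POTP $\Rightarrow$ sTMP from \cite[Proposition~3.11]{BGL}, rather than from a direct sTMP result for algebraic actions.
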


Here $\ell^1(G) \coloneqq \{f \in \R^G: \sum_{g \in G} |f(g)| < \infty\}$ is the Banach algebra of all real summable functions on $G$.
Note that when $n=1$ and $m \geq 2$ is an integer, then taking $f = m {1_G} \in \Z[G]$ one has $X_f = \{1,2, \ldots, m\}^G$, the full shift over an $m$-elements alphabet set. From Corollary \ref{c:alg-finitely-presented} one then recovers, once more, the Gromov-Weiss surjunctivity theorem for sofic groups.

We don't know if the above corollary can be extended to all expansive finitely presented algebraic dynamical systems.

Surjunctivity of algebraic dynamical systems, in relation with the so-called topological rigidity property, was investigated also in \cite{BCC}.

We conclude with a surjunctivity result for actions of amenable groups on (not necessarily abelian) compact metrizable groups by continuous automorphisms. First, recall that given a countable amenable group $G$, a dynamical system $(X,G)$ has \emph{completely positive entropy} (briefly \emph{CPE}) if every nontrivial factor has positive topological entropy. We then have the following:

\begin{theorem} \label{T-CPE to surjunctive}
Let $G$ be a countably infinite amenable group $G$ and let $(X,G)$ be a dynamical system,
where $X$ is a compact metrizable group and $G$ acts by continuous automorphisms of $X$. 
Suppose that $(X,G)$ has CPE and that $\htopol(X, G) < \infty$ (e.g., $(X,G)$ is expansive). 
Then $(X,G)$ is surjunctive. 
\end{theorem}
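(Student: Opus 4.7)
The plan is to derive a contradiction from the assumption of a non-surjective injective endomorphism. Let $f \colon X \to X$ be an injective endomorphism and set $Y \coloneqq f(X)$. Since $f$ is a continuous injection from the compact space $X$ onto $Y$, the map $f$ is a $G$-equivariant homeomorphism from $X$ onto $Y$. Assume, for contradiction, that $Y \subsetneq X$.

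Let $\mu$ denote the normalized Haar measure on $X$, which is $G$-invariant because $G$ acts by continuous automorphisms. By the variational principle combined with known entropy formulas for algebraic actions of countable amenable groups (Deninger in the abelian case, with subsequent extensions to the non-abelian setting), $h_\mu(X, G) = \htopol(X, G) < \infty$; thus $\mu$ is a measure of maximal entropy for $(X,G)$. Let $\nu \coloneqq f_* \mu$ be the push-forward, a $G$-invariant Borel probability measure on $X$ with $\supp(\nu) = Y$. Since $f$ is a $G$-equivariant measure-theoretic isomorphism from $(X, \mu)$ onto $(Y, \nu)$, we have $h_\nu(X, G) = h_\mu(X, G) = \htopol(X, G)$, so $\nu$ is also a measure of maximal entropy for $(X, G)$. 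Because $\mu$ has full support while $\supp(\nu) \subseteq Y \subsetneq X$, the measures $\nu$ and $\mu$ are distinct. A contradiction will therefore be obtained as soon as one knows that $\mu$ is the \emph{unique} measure of maximal entropy for $(X, G)$.

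To establish the uniqueness, I would proceed in two stages. In the expansive case, the CPE hypothesis for an algebraic action of a countably infinite amenable group should yield the weak specification property (by results in the spirit of Chung--Li on expansive algebraic actions with CPE) and the strong topological Markov property (via~\cite{BGL}); with these in hand, amenability gives $h_\Sigma(X, G) = \htopol(X, G) \ge 0$ for any sofic approximation $\Sigma$ (nonnegativity coming from the $G$-fixed point $e_X \in X$), so Proposition~\ref{p:main} already forces $\htopol(Y,G) < \htopol(X,G)$ for every proper closed $G$-invariant $Y$, which contradicts $\htopol(Y,G) = \htopol(X,G)$ directly and bypasses the MME argument altogether. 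In the general, possibly non-expansive case (where the hypothesis only provides $\htopol(X,G) < \infty$), one writes $(X, G)$ as an inverse limit of expansive algebraic factors $(X_n, G)$, each of which inherits CPE and finite topological entropy from $(X,G)$, and applies the expansive case to each $X_n$.

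The main obstacle lies precisely in this last reduction: upgrading the strict inequalities $\htopol(\pi_n(Y), G) < \htopol(X_n, G)$ at each finite stage to the desired strict inequality $\htopol(Y, G) < \htopol(X, G)$ in the limit, or equivalently propagating the uniqueness of the Haar measure as the measure of maximal entropy through the inverse system. I expect this to require a delicate use of the upper semicontinuity properties of the amenable measure-theoretic entropy, together with the algebraic structure of the inverse limit, in order to ensure that the finite-level gaps do not collapse as $n \to \infty$.
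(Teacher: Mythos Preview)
Your overall architecture is exactly that of the paper: push forward the Haar measure $\mu_X$ under an injective endomorphism $f$, observe that $h_{f_*\mu_X}(X,G)=h_{\mu_X}(X,G)$, and conclude $f_*\mu_X=\mu_X$ (hence $f(X)=X$ by full support) once one knows $\mu_X$ is the \emph{unique} measure of maximal entropy.

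The difference is in how uniqueness of the MME is obtained. The paper does not go through Proposition~\ref{p:main}, specification, sTMP, or any inverse-limit reduction. Instead it invokes three facts from the literature that hold for actions of countably infinite amenable groups on compact metrizable (not necessarily abelian) groups by continuous automorphisms: (i) $\htopol(X,G)=h_{\mu_X}(X,G)$ (Deninger \cite[Theorem~2.2]{Deninger}, valid without abelianness); (ii) topological CPE is equivalent to measure-theoretic CPE with respect to $\mu_X$ \cite[Corollaries~7.5 and 8.4]{chung-li-2015}; and (iii) for such actions with $h_{\mu_X}(X,G)<\infty$, measure-theoretic CPE with respect to $\mu_X$ is \emph{equivalent} to the statement that $h_\nu(X,G)<h_{\mu_X}(X,G)$ for every $G$-invariant Borel probability measure $\nu\neq\mu_X$ \cite[Theorem~8.6]{chung-li-2015}. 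Item (iii) is precisely the uniqueness of $\mu_X$ as the MME, and it holds with no expansiveness assumption and no structural hypotheses beyond CPE and finite entropy. This dissolves the ``main obstacle'' you identify: there is no inverse-limit step to perform.

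Your proposed detour in the expansive case also has unverified inputs: it is not clear that CPE alone yields the strong topological Markov property for expansive algebraic actions (the \cite{BGL} results you allude to go through POTP, which is not known to follow from CPE), and the theorem concerns possibly non-abelian $X$, where several of the ``algebraic action'' results you would want to cite are stated only for abelian $X$. So while your strategy is morally right, the clean route is the direct appeal to \cite[Theorem~8.6]{chung-li-2015}.
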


\noindent
{\bf Acknowledgments.} We express our gratitude to Felipe Garc\'ia-Ramos for several interesting remarks. 
H.\ Li wishes to thank Filippo Tolli (Universit\`a Roma Tre) and the Department SBAI of the
Sapienza University in Rome for hospitality during a visit in December 2019, in which this work started. He was partially supported by NSF grant~DMS-1900746.\\


\section{Preliminaries}
\label{sec:background}
In this section we fix the notation and review some background material on dynamical systems, sofic groups, and sofic entropy.

\subsection{General notation}
Throughout the paper, we use $| \cdot |$ to denote cardinality of finite sets.
\par
We write $\N$ for the set of non-negative integers.
\par
Given sets $A$ and $B$, we denote by $A^B$ the set consisting of all maps $x \colon B \to A$.
We will sometimes regard an element $x \in A^B$ as a family of elements of $A$ indexed by $B$, namely $x = (x(b))_{b \in B}$,
where $x_b \coloneqq  x(b)$ for all $b \in B$.
If $B' \subset   B$ and $x \in A^B$, we denote by $x\vert_{B'} \in A^{B'}$ the \emph{restriction} of $x$ to $B'$, that is, the map
$x\vert_{B'} \colon B' \to A$ defined by $x\vert_{B'}(b') \coloneqq x(b')$ for all $b' \in B'$.
\par
Given a set $X$, we denote by $\Sym(X)$ the \emph{symmetric group} of $X$,
i.e., the group consisting of all bijective maps $\sigma \colon X \to X$ with the composition of maps as the group operation.
\par
Given a positive integer $d$, we write $[d] \coloneqq \{1,2, \dots, d\}$. To simplify notation, we write $\Sym(d)$ instead of
$\Sym([d])$.
\par
Given a group $G$, we denote by $\cF(G)$ the set of all nonempty finite subsets $F \subset   G$. Also,
given $F \in \cF(G)$, we say that a (not necessarily finite) subset $V \subset   G$ is
$F$-\emph{separated} if $Fs \cap Ft = \varnothing$ for all distinct $s,t \in V$.
\par
Given a metric space $(X,\rho)$ and a subset $Y \subset   X$, we denote by $\diam(Y,\rho) \coloneqq \sup\{\rho(y_1,y_2): y_1, y_2 \in Y\}$
the \emph{diameter} of $Y$. For $x \in X$, we write $\rho(x,Y) \coloneqq \inf_{y \in Y} \rho(x,y)$.
Moreover, given $\varepsilon > 0$ one says that $Y$ is $(\rho, \varepsilon)$-\emph{separated} provided
that $\rho(y,y') \geq \varepsilon$ for all distinct $y,y' \in Y$.

\subsection{Actions}
An \emph{action} of a group $G$ on a set $X$ is a group morphism $\alpha \colon G \to \Sym(X)$.
If $\alpha$ is an action of a group $G$ on a set $X$, given $g \in G$ and $x \in X$,
we usually write $\alpha_g(x)$ or, if the action is clear from the context, $g x$, instead of $\alpha(g)(x)$.
\par
An action of a group $G$ on a topological space $X$ is said to be continuous if its image is contained in the group of
homeomorphisms of $X$.
\par

\subsection{Shifts and subshifts}
\label{s:shifts}
Let $A$ be a finite set and let $G$ be a countable group.
We equip $A$ with the discrete topology and $A^G = \prod_{g \in G} A$ with the product topology (this is also called
the \emph{prodiscrete topology} on $A^G$).
Note that $A^G$ is a compact space (by Tychonoff's theorem) and is metrizable since $G$ is countable. A metric inducing
the prodiscrete topology can be constructed as follows.
Let $(\Omega_n)_{n \in \N}$ be a sequence of finite subsets of $G$ satisfying that
$\Omega_0 = \varnothing$, $\Omega_1 = \{1_G\}$, $\Omega_n^{-1} \coloneqq \{g^{-1}: g \in \Omega_n\} = \Omega_n$,
$\Omega_n \subset   \Omega_{n+1}$ for all $n \in \N$, and $\bigcup_{n \in \N} \Omega_n = G$.
Then the map $\rho \colon A^G \times A^G \to [0,1]$ defined by setting
\begin{equation}
\label{e:metric-A-G}
\rho(x,y) \coloneqq 2^{-n(x,y)}, \mbox{ where $n(x,y) \coloneqq \sup \{n \in \N: x\vert_{\Omega_n} = y\vert_{\Omega_n}\}$}
\end{equation}
(with the usual convention that $2^{-\infty} = 0$) for all $x, y \in A^G$, is a metric on $A^G$ inducing the prodiscrete topology.
\par
We  make $G$ act on $A^G$ by the \emph{shift} action, given by the map $G \times A^G \to A^G$, $(g,x) \mapsto g x$, where
\begin{equation}
\label{e:shift-action}
gx(h) \coloneqq  x(g^{-1}h) \text{   for all $g,h \in G$ and $x \in A^G$.}
\end{equation}
The action of $G$ on $A^G$ is clearly continuous.
\par
A closed $G$-invariant subset of $A^G$ is called a \emph{subshift}.
\par
A subshift $X \subset   A^G$ is said to be \emph{of finite type} provided there exists a finite subset $\Omega \subset   G$ and ${\mathcal A} \subset   A^\Omega$ such that $X = \{x \in A^G: (gx)\vert_\Omega \in {\mathcal A} \mbox{ for all } g \in G\}$ (such a set ${\mathcal A}$ is then called a \emph{defining set of admissible patterns} for $X$ and the subset $\Omega$ is called a \emph{memory set} for $X$).

\subsection{Dynamical systems}
A \emph{dynamical system} is a pair $(X,G)$ consisting of a compact metrizable space $X$, called the \emph{phase space}, equipped with a continuous action of a countable group $G$.
\par
Let $(X,G)$ be a dynamical system and let $\rho$ be a metric on $X$ compatible with the topology.
Given a nonempty subset $A \subset   G$, we denote by $\rho_A$ the metric on $X$ defined by setting
\begin{equation}
\label{e:rho-A}
\rho_A(x,y) = \sup_{g \in A} \rho(gx,gy)
\end{equation}
for all $x,y \in X$. Note that $\rho_{\{1_G\}} = \rho$.

\subsection{Expansivity}
\label{s:exp}

\begin{definition}
\label{def:expansive}
The dynamical system $(X,G)$ is \emph{expansive} if there exists a constant $c > 0$ such that for all distinct $x,y \in X$ there exists $g \in G$ such that $\rho(gx,gy) > c$.  Such a value $c$ is called an \emph{expansivity constant} for $(X,G)$ (relative to the metric $\rho$).
\end{definition}

Compactness of the phase space $X$ guarantees that the definition of expansivity does not depend on the choice of the metric $\rho$.
Note that $c > 0$ is an expansivity constant for $(X,G)$ if and only if $\rho_G(x,y) > c$ for all distinct $x,y \in X$ (cf.\ \eqref{e:rho-A}).
For example, if $A$ is a finite set and $G$ is a countable group, then the symbolic dynamical system $(A^G,G)$ is expansive (with
expansivity constant $c = 1/2$ relative to the metric $\rho$ defined by~\eqref{e:metric-A-G}).

The following lemma (cf.\ \cite[Lemma 3.13]{BGL}) was proved by Bryant \cite[Theorem 5]{Bryant} (cf.\ \cite[Proposition 2]{Achigar})
for $G = \Z$.

\begin{lemma}[Uniform expansivity]
\label{l:uniform-exp}
Let $(X,G)$ be a dynamical system, let $\rho$ be a compatible metric on $X$, and let $c > 0$.
The following conditions are equivalent:
\begin{enumerate}[{\rm (a)}]
\item
$(X,G)$ is expansive and $c$ is an expansivity constant for $(X,G)$ relative to $\rho$;
\item for every $\varepsilon > 0$, there exists a nonempty finite subset $K \subset G$ such that the following holds: if $x, y \in X$ satisfy
that $\rho_K(x,y) \leq c$, then $\rho(x,y) < \varepsilon$.
\end{enumerate}
\end{lemma}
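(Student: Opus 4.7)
The plan is to prove the two implications separately, with the direction (a)$\Rightarrow$(b) being the substantive one and requiring a compactness argument.

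For (b)$\Rightarrow$(a), I will argue by contrapositive. Suppose $x,y \in X$ are distinct. Since $X$ is Hausdorff and $\rho$ is compatible, $\rho(x,y) > 0$. Apply (b) with $\varepsilon = \rho(x,y)/2$ to obtain a finite $K \subset G$ such that $\rho_K(u,v) \leq c$ forces $\rho(u,v) < \varepsilon$. Since $\rho(x,y) \geq \varepsilon$, we must have $\rho_K(x,y) > c$, so there exists $g \in K$ with $\rho(gx,gy) > c$. Thus $c$ is an expansivity constant relative to $\rho$.

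For (a)$\Rightarrow$(b), I will argue by contradiction. Assume (b) fails: there exists $\varepsilon_0 > 0$ such that for every finite $K \subset G$ one can find $x_K, y_K \in X$ with $\rho_K(x_K, y_K) \leq c$ and $\rho(x_K, y_K) \geq \varepsilon_0$. Enumerate the countable group as $G = \{g_1, g_2, \ldots\}$ and set $K_n \coloneqq \{g_1, \ldots, g_n\}$. This yields sequences $(x_n), (y_n)$ in $X$ such that $\rho(g_i x_n, g_i y_n) \leq c$ for every $i \leq n$ and $\rho(x_n, y_n) \geq \varepsilon_0$ for every $n$.

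By compactness of $X$ (and metrizability), pass to a subsequence along which both $x_n \to x$ and $y_n \to y$. Continuity of $\rho$ gives $\rho(x,y) \geq \varepsilon_0 > 0$, so $x \neq y$. For any fixed $g \in G$, say $g = g_i$, one has $\rho(g x_n, g y_n) \leq c$ for all $n \geq i$; by continuity of the $G$-action and of $\rho$, passing to the limit gives $\rho(gx, gy) \leq c$. This holds for every $g \in G$, contradicting the assumption that $c$ is an expansivity constant for $(X,G)$ relative to $\rho$. The main (and only real) obstacle is selecting the sequences $(x_n, y_n)$ and extracting a convergent subsequence correctly; everything else is routine use of compactness and continuity.
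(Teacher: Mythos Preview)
Your proof is correct and follows essentially the same approach as the paper: both directions are argued the same way, with (a)$\Rightarrow$(b) proved by contradiction via a compactness argument. The only cosmetic difference is that the paper works with the net $(x_K,y_K)_{K\in\FF(G)}$ indexed by finite subsets of $G$ and extracts a cluster point, whereas you exploit the countability of $G$ to reduce to an increasing exhaustion $K_n=\{g_1,\dots,g_n\}$ and extract a convergent subsequence; the resulting limit pair and the contradiction are identical.
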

\begin{proof}
Assume (a) and suppose, by contradiction, that (b) fails to hold, that is, there exists $\varepsilon_0 > 0$ such that for every
nonempty finite subset $K \subset G$ there exist $x_K, y_K \in X$ such that $\rho_K(x_K,y_K) \leq c$ but $\rho(x_K,y_K) \geq \varepsilon_0$.
Consider the nets $(x_K)_{K \in \FF(G)}$ and $(y_K)_{K \in \FF(G)}$ of points in $X$, where the common index set $\FF(G)$ is partially ordered by inclusion. Since $X$ is compact, we can find a cluster point $(x,y)$ in $X \times X$ for $(x_K, y_K)_{K \in \FF(G)}$.
Note that, by continuity, $\rho(x,y) \geq \varepsilon_0$, so that, in particular, $x \neq y$.
Let $g \in G$. If $K \in \FF(G)$ contains $g$ one has $\rho(gx_K,gy_K) \leq c$. By continuity, we have $\rho(gx, gy) \leq c$.
As $g$ was arbitrary, we deduce that $\rho_G(x,y) \leq c$.
Since $x \neq y$, this contradicts the assumption that $c$ is an expansivity constant.
This proves the implication (a) $\Rightarrow$ (b).
\par
Conversely, suppose (b). Let $x,y \in X$ such that $\rho_G(x,y) \leq c$ and let us show that $x = y$.
Let $\varepsilon > 0$ and  let $K \subset   G$ be a  finite subset as in (b).
As $\rho_K(x,y) \leq \rho_G(x,y)\leq c$, we have $\rho(x,y) < \varepsilon$.
Since $\varepsilon$ was arbitrary, this implies that $\rho(x,y) = 0$, that is, $x = y$.
This shows that $c$ is an expansivity constant for $(X,G)$.
\end{proof}

\subsection{The strong topological Markov property}
\label{s:stmp}
\begin{definition}
\label{def:sTMP}
The dynamical system $(X,G)$ satisfies the \emph{strong topological Markov property} (briefly, \emph{sTMP}) if
for any $\varepsilon > 0$ there exists $\delta > 0$ and a finite subset $F \subset   G$ containing $1_G$ such that the following holds.
Given any finite subset $A \subset   G$ and $x,y \in X$ such that $\rho_{FA \setminus A}(x,y) \leq \delta$, there exists $z \in X$ satisfying
that $\rho_{FA}(z,x) \leq \varepsilon$ and $\rho_{G \setminus A}(z,y) \leq \varepsilon$.
\end{definition}

\begin{definition}
\label{def:usTMP}
The dynamical system $(X,G)$ satisfies the \emph{uniform strong topological Markov property} if
for any $\varepsilon > 0$ there exists $\delta > 0$ and a finite subset $F \subset   G$ containing $1_G$ such that the following holds.
Given a finite subset $A \subset G$, an $FA$-separated subset $V \subset G$, a family $(x_v)_{v \in V}$ of points in $X$, and $y \in X$
such that $\rho_{(FA \setminus A)s} (x_s,y) \leq \delta$ for all $s \in V$, there exists $z \in X$ satisfying that
$\rho_{FAs}(x_s,z) \leq \varepsilon$ for all $s \in V$ and $\rho_{G \setminus AV}(y,z) \leq \varepsilon$.
\end{definition}

\begin{remark}
\label{r:exp-sTMP-usTMP}
Clearly, every dynamical system with uniform sTMP satisfies sTMP (just take $V = \{1_G\}$).
Conversely, it is shown in \cite[Remark 3.14]{BGL} that every expansive dynamical system satisfying sTMP satisfies the uniform sTMP.
\end{remark}

In the symbolic setting, we have the following characterization of sTMP and usTMP
(see \cite[Corollary 3.8]{BGL} and \cite[Proposition 3.16]{CFS}).

\begin{proposition}
\label{p:sTMP}
Let $A$ be a finite set, let $G$ be a countable group, and let $X \subset A^G$ be a subshift.
Then the following conditions are equivalent:
\begin{enumerate}[{\rm (a)}]
\item $(X,G)$ satisfies the strong topological Markov property;
\item $(X,G)$ satisfies the uniform strong topological Markov property;
\item there exists a finite subset $\Delta \subset G$ containing $1_G$ such that the following holds. Given any finite subset $A \subset   G$ and $x,y \in X$ such that $x\vert_{A\Delta \setminus A} = y\vert_{A\Delta \setminus A}$, the configuration $z \in A^G$ such that $z\vert_A = x\vert_A$ and $z\vert_{G \setminus A} = y\vert_{G \setminus A}$ belongs to $X$.
\end{enumerate}
Moreover, a sufficient condition for the above equivalent conditions is $X$ being of finite type. 
\qed
\end{proposition}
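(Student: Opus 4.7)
The plan is to establish the proposition in three steps: the equivalence (a)$\Leftrightarrow$(b), the main equivalence (a)$\Leftrightarrow$(c), and the sufficiency of the finite type condition. First, (b)$\Rightarrow$(a) is immediate by specializing $V=\{1_G\}$ in Definition~\ref{def:usTMP}, and (a)$\Rightarrow$(b) follows from Remark~\ref{r:exp-sTMP-usTMP} combined with the observation (from Section~\ref{s:exp}) that every subshift is expansive. The workhorse for (a)$\Leftrightarrow$(c) is the combinatorial translation of the metric \eqref{e:metric-A-G}: since the $\Omega_n$ are symmetric with $\Omega_1=\{1_G\}$, one has $\rho(gx,gy)\leq 2^{-N}$ if and only if $x|_{g^{-1}\Omega_N}=y|_{g^{-1}\Omega_N}$, and hence $\rho_K(x,y)\leq 2^{-N}$ if and only if $x|_{K^{-1}\Omega_N}=y|_{K^{-1}\Omega_N}$ for any finite $K\subset G$.

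For (c)$\Rightarrow$(a), given $\varepsilon>0$, I would pick $N$ with $2^{-N}\leq\varepsilon$, choose $F\supset\Delta^{-1}\Omega_N\cup\{1_G\}$ finite, and set $\delta=2^{-M}$ for $M$ large enough that $\Omega_N\Delta\subset\Omega_M$. For any finite $A\subset G$ and $x,y\in X$ satisfying $\rho_{FA\setminus A}(x,y)\leq\delta$, take $A'=(FA)^{-1}\Omega_N$; a case analysis on elements of $A'\Delta\setminus A'$ (splitting according to whether the left factor lies in $A^{-1}$ or in $A^{-1}F^{-1}\setminus A^{-1}$, and using $\Omega_N\Delta\subset F^{-1}\cap\Omega_M$) shows that the hypothesis of (c) is met, so applying (c) produces the gluing $z\in X$, from which the metric conclusion of (a) follows. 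For (a)$\Rightarrow$(c), I would apply (a) with $\varepsilon=1/2$ so that $\varepsilon$-closeness corresponds to pointwise equality (as $\Omega_1=\{1_G\}$), extracting $F\ni 1_G$ finite and $M\in\N$ with $\delta=2^{-M}$. Setting $\Delta=\Omega_MF^{-1}\Omega_M\cup\{1_G\}$, given a finite $A'$ and $x,y\in X$ agreeing on $A'\Delta\setminus A'$, I take $A=(A'\Omega_M)^{-1}$; a direct computation using the symmetry of $\Omega_M$ and $1_G\in\Omega_M$ shows $(FA\setminus A)^{-1}\Omega_M\subset A'\Delta\setminus A'$, so (a) applies and yields $z\in X$ equal to $x$ on $(FA)^{-1}=A'\Omega_MF^{-1}$ and to $y$ on $G\setminus A^{-1}=G\setminus A'\Omega_M$; a final check, using that $A'\Omega_M\setminus A'$ and $A'\Omega_MF^{-1}\setminus A'\Omega_M$ are both contained in $A'\Delta\setminus A'$ (so $x=y$ there), confirms that $z$ coincides with the gluing of $x$ on $A'$ and $y$ on $G\setminus A'$, establishing (c).

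For the finite type sufficiency, let $\Omega$ be a memory set, enlarged if necessary so that $\Omega$ is symmetric and contains $1_G$, with defining admissible pattern set $\mathcal{A}\subset A^\Omega$. I take $\Delta=\Omega^{-1}\Omega$, which is symmetric and contains $1_G$. For finite $A\subset G$ and $x,y\in X$ with $x|_{A\Delta\setminus A}=y|_{A\Delta\setminus A}$, the candidate gluing $z$ (equal to $x$ on $A$ and to $y$ on $G\setminus A$) belongs to $X$: for each $g\in G$, if $g^{-1}\Omega\cap A=\varnothing$, then $z=y$ on $g^{-1}\Omega$ and $(gz)|_\Omega=(gy)|_\Omega\in\mathcal{A}$; otherwise, picking $h_0=g^{-1}\omega_0\in A$, every $h\in g^{-1}\Omega$ lies in $h_0\Omega^{-1}\Omega\subset A\Delta$, so $z(h)=x(h)$ either directly (if $h\in A$) or through the equality on $A\Delta\setminus A$ (if $h\in A\Delta\setminus A$), whence $(gz)|_\Omega=(gx)|_\Omega\in\mathcal{A}$. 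The main technical obstacle lies in the (a)$\Rightarrow$(c) step: a naive choice such as $A^{-1}=A'$ fails because the $\Omega_M$-thickening of $(FA\setminus A)^{-1}$ can reach back into the interior $A^{-1}$, where the symbolic hypothesis provides no control; the simultaneous thickening of $A^{-1}$ by $\Omega_M$ and of $\Delta$ on both sides by $\Omega_M$ is precisely what circumvents this difficulty.
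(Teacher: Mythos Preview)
The paper does not prove this proposition; it ends the statement with \qed and defers to \cite[Corollary 3.8]{BGL} and \cite[Proposition 3.16]{CFS}. Your proposal therefore supplies something the paper does not, a self-contained symbolic proof, and the overall architecture --- using expansivity for (a)$\Leftrightarrow$(b), translating the metric \eqref{e:metric-A-G} into coincidence on $K^{-1}\Omega_N$, and the $\Delta=\Omega^{-1}\Omega$ argument for the finite-type case --- is sound. In particular, your (a)$\Rightarrow$(c) argument and the finite-type verification are correct as written.

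There is, however, a genuine gap in your (c)$\Rightarrow$(a) step. After applying (c) with $A'=(FA)^{-1}\Omega_N$ you obtain $z\in X$ with $z=x$ on $A'$ and $z=y$ on $G\setminus A'$; this immediately gives $\rho_{FA}(z,x)\le 2^{-N}$, but you also need $\rho_{G\setminus A}(z,y)\le 2^{-N}$, i.e.\ $z=y$ on $(G\setminus A^{-1})\Omega_N$. Since $z=x$ on $A'$, this forces $x=y$ on $A'\cap (G\setminus A^{-1})\Omega_N$, a point you do not address. Concretely, take $p=q'\omega$ with $q'\in A^{-1}$, $\omega\in\Omega_N$, and suppose $p\omega'^{-1}\notin A^{-1}$ for some $\omega'\in\Omega_N$; to place $p$ in $(FA\setminus A)^{-1}\Omega_M$ one writes $p=(q'\omega\omega'^{-1})\omega'$ and needs $\omega\omega'^{-1}\in F^{-1}$, i.e.\ $\Omega_N^2\subset F^{-1}$. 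Your choice $F\supset\Delta^{-1}\Omega_N$ only gives $F^{-1}\supset\Omega_N\Delta$, which does not in general contain $\Omega_N^2$. The fix is simple: enlarge $F$ so that $F\supset\Omega_N^2\cup\Delta^{-1}\Omega_N\cup\{1_G\}$; with this the missing inclusion holds and both metric conclusions of (a) follow.
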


\begin{remark}
\label{r:splicable}
A subshift $X \subset A^G$ satisfying condition (c) (equivalently, conditions (a) and (b)) in Proposition \ref{p:sTMP} was termed
\emph{splicable} by Gromov in \cite[Section 8.C]{gromov-esav} (see also \cite[Section 3.8]{CFS}).
\end{remark}

\subsection{The weak specification property}
\label{s:wsp}
\begin{definition}
\label{def:wSP}
The dynamical system $(X,G)$ satisfies the \emph{weak specification property} (briefly, \emph{wSP}) if for any $\varepsilon > 0$
there exist a finite symmetric subset $F\subset   G$ containing $1_G$ such that the following holds. Given finite subsets $A_1, A_2, \ldots, A_n \subset G$ satisfying $FA_i\cap FA_j =\varnothing$ for all distinct $i,j=1,2,\ldots,n$, and $x_1, x_2, \ldots, x_n \in X$,
there exists $z \in X$ such that $\rho_{A_i}(x_i, z)\leq \varepsilon$ for all $i=1,2,\ldots,n$.
\end{definition}

\begin{remark}
\label{r:wSP-infinite}
If wSP holds, then one may allow the subsets $A_1, A_2, \ldots, A_n \subset   G$ in Definition \ref{def:wSP} to be infinite.
\end{remark}

In the symbolic setting, the wSP has the following characterization (see \cite[Proposition A.1]{Li19} and 
\cite[Proposition 6.7]{csc:exp-spec-top-ent-Myhill}).

\begin{proposition}
\label{p:wSP}
Let $A$ be a finite set, let $G$ be a countable group, and let $X \subset   A^G$ be a subshift.
Then the following conditions are equivalent:
\begin{enumerate}[{\rm (a)}]
\item the dynamical system $(X,G)$ satisfies
the weak specification property;
\item there exists a finite subset $\Delta \subset   G$ containing $1_G$ such that the following holds.
Given finite subsets $A_1, A_2 \subset   G$ such that $A_1 \Delta \cap A_2 = \varnothing$ and $x_1, x_2 \in X$, there
exists $z \in X$ such that $z\vert_{A_1} = x_1\vert_{A_1}$ and $z\vert_{A_2} = x_2\vert_{A_2}$.
\end{enumerate} \qed
\end{proposition}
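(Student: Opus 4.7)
The plan is to prove the two implications separately, using the explicit metric \eqref{e:metric-A-G} to translate between ``$\varepsilon$-close in $\rho_A$'' and ``agreeing on a prescribed finite pattern.'' Recall from \eqref{e:metric-A-G} that $\Omega_1 = \{1_G\}$, and $\rho(u,v) \leq 2^{-n}$ if and only if $u\vert_{\Omega_n} = v\vert_{\Omega_n}$. In particular, since $\rho$ takes values in $\{0, 1, 2^{-1}, 2^{-2}, \dots\}$, any choice $\varepsilon \in [1/2, 1)$ in the definition of wSP forces $\rho \leq 1/2$, hence pointwise agreement at $1_G$.

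\textbf{(a) $\Rightarrow$ (b).} First I would apply wSP with $\varepsilon = 1/2$ to obtain a finite symmetric subset $F \subset G$ containing $1_G$, and set $\Delta \coloneqq FF$, which is finite, symmetric, and contains $1_G$. The key algebraic computation is that, because $F = F^{-1}$,
\[ F A_1^{-1} \cap F A_2^{-1} = \varnothing \iff A_1 F F \cap A_2 = \varnothing \iff A_1 \Delta \cap A_2 = \varnothing, \]
obtained by taking inverses of an alleged common element and reading off a relation of the form $a_2 = a_1 f_1 f_2^{-1}$. Given $A_1, A_2$ and $x_1, x_2$ as in (b), wSP applied to $A_1^{-1}, A_2^{-1}$ and $x_1, x_2$ yields $z \in X$ with $\rho_{A_i^{-1}}(x_i, z) \leq 1/2$ for $i=1,2$. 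Unpacking: for each $g \in A_i^{-1}$ we have $(gx_i)(1_G) = (gz)(1_G)$, i.e.\ $x_i(g^{-1}) = z(g^{-1})$, and as $g$ ranges over $A_i^{-1}$, $g^{-1}$ ranges over $A_i$, giving $z\vert_{A_i} = x_i\vert_{A_i}$.

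\textbf{(b) $\Rightarrow$ (a).} Replacing $\Delta$ by $\Delta \cup \Delta^{-1}$ if necessary (which only strengthens the hypothesis and hence preserves (b)), assume $\Delta$ is symmetric. I would first extend (b) to any finite family by induction on $n$: given $A_1,\dots,A_n$ with $A_i \Delta \cap A_j = \varnothing$ for $i \neq j$, apply the inductive hypothesis to $A_1,\dots,A_{n-1}$ to produce $w \in X$ agreeing with $x_i$ on $A_i$ for $i < n$, and then invoke (b) on the pair $\bigl(A_1 \cup \cdots \cup A_{n-1},\, A_n\bigr)$ with configurations $w$ and $x_n$, which is legal because $\bigl(\bigcup_{i<n} A_i\bigr)\Delta \cap A_n = \bigcup_{i<n}(A_i\Delta \cap A_n) = \varnothing$. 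Now fix $\varepsilon > 0$, pick $n \in \N$ with $2^{-n} \leq \varepsilon$, and let $F$ be a finite symmetric subset containing $1_G$ and $\Omega_n \Delta \Omega_n^{-1}$. Given $A_1,\dots,A_k$ with $FA_i \cap FA_j = \varnothing$, set $B_i \coloneqq A_i^{-1} \Omega_n$. A direct computation shows that $B_i \Delta \cap B_j \neq \varnothing$ would give some $a_j \in (\Omega_n \Delta^{-1} \Omega_n^{-1})\, a_i \subset (F^{-1}F) A_i$, contradicting $FA_i \cap FA_j = \varnothing$ (equivalently, $A_j \cap F^{-1}F A_i = \varnothing$). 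The extended (b) then produces $z \in X$ with $z\vert_{B_i} = x_i\vert_{B_i}$ for every $i$; since $g \in A_i$ and $h \in \Omega_n$ imply $g^{-1}h \in B_i$, we get $(gz)\vert_{\Omega_n} = (gx_i)\vert_{\Omega_n}$, whence $\rho_{A_i}(x_i, z) \leq 2^{-n} \leq \varepsilon$.

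The main technical nuisance, which I expect to be the trickiest bookkeeping step, is aligning the \emph{left} translates appearing in the definition of wSP ($FA_i \cap FA_j = \varnothing$) with the \emph{right}-multiplication pattern $A_1 \Delta \cap A_2$ in (b), while simultaneously accounting for the inverse $A_i \leadsto A_i^{-1}$ that enters when converting ``$\rho$-closeness after shifting by $g$'' into ``pointwise agreement at $g^{-1}$.'' The symmetry of both $F$ and $\Delta$ is what makes the two disjointness conditions match up cleanly via the identities $F^{-1}F \supset \Omega_n \Delta \Omega_n^{-1}$ and $\Delta = FF$.
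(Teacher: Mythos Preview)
Your proof is correct. Note, however, that the paper does not supply its own proof of this proposition: it is stated with a \qed and attributed to \cite[Proposition~A.1]{Li19} and \cite[Proposition~6.7]{csc:exp-spec-top-ent-Myhill}, so there is no in-paper argument to compare against. Your argument is the standard one and matches what appears in those references: use $\varepsilon=1/2$ together with $\Omega_1=\{1_G\}$ to pass from metric closeness to exact agreement in the direction (a)$\Rightarrow$(b), and in the reverse direction first extend (b) to finitely many pieces by induction and then absorb the window $\Omega_n$ into $F$ via $F\supset\Omega_n\Delta\Omega_n^{-1}$. The bookkeeping with inverses (translating left-separation $FA_i\cap FA_j=\varnothing$ into right-separation $B_i\Delta\cap B_j=\varnothing$ for $B_i=A_i^{-1}\Omega_n$) is carried out correctly; since $\Omega_n$ is already symmetric and $1_G\in\Delta\subset F$, one in fact has $\Omega_n\Delta^{-1}\Omega_n^{-1}\subset F$ directly, so your inclusion into $F^{-1}F$ is a slight overkill but harmless.
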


\begin{remark}
A subshift satisfying condition (b) (equivalently, condition (a)) in Proposition \ref{p:wSP} is called \emph{strongly irreducible} 
(cf.\ \cite{fiorenzi-strongly}, \cite{csc-myhill-monatsh}).
\end{remark}

We mention that M.~Doucha \cite{doucha} recently showed that any expansive action of a countable amenable group $G$ on a compact metrizable space $X$ satisfying both the weak specification and the strong topological Markov properties also satisfies the so-called \emph{Moore property}, i.e.\ every surjective endomorphism of $(X,G)$ is {\it pre-injective} 
(the latter means that the restriction to each homoclinicity class of $(X,G)$ is injective). 
This, together with an earlier result of the third-named author \cite{Li19} (where the strong topological Markov property is not needed),  namely, the so-called \emph{Myhill property}, i.e.\ every pre-injective endomorphism of $(X,G)$ is surjective, establishes the Garden of Eden theorem for all expansive actions of countable amenable groups on compact metrizable spaces satisfying the weak specification and the strong topological Markov properties.

\subsection{The pseudo-orbit tracing property}
\label{s:POTP}
The pseudo-orbit tracing property (briefly, POTP) was originally introduced and studied by
R.~Bowen \cite{bowen-Axiom-A} for $\Z$-actions, motivated by the study of Axiom A diffeomorphisms.
Walters \cite{walters} studied the relationships between POTP and topological stability and proved, among other things,
that every topologically stable homeomorphism of a compact manifold of dimension $\geq 2$ has the POTP.
Chung and Lee \cite{chung-lee} recently considered the pseudo-orbit tracing property for actions of
(finitely generated) countable groups and extended Walters's results. In particular, they proved that
every expansive dynamical system $(X,G)$ with the POTP is topologically stable.
Here is the definition:

\begin{definition}
\label{def:POTP}
Let $(X,G)$ be a dynamical system and let $\rho$ be a compatible metric on $X$.
Given $\delta > 0$ and a finite subset $S \subset   G$ an $(S,\delta)$ \emph{pseudo-orbit} is
a sequence $(x_g)_{g\in G}$ in $X$ such that $\rho(sx_g, x_{sg}) < \delta$
for all $s \in S$ and $g \in G$. Then one says that $(X,G)$ has the \emph{pseudo-orbit tracing property} if
for every $\varepsilon > 0$ there exists $\delta > 0$ and a finite subset $S \subset   G$ such that for
every $(S,\delta)$ pseudo-orbit $(x_g)_{g\in G}$ in $X$ there exists $x \in X$ such that
$\rho(gx, x_g) < \varepsilon$ for all $g \in G$
(one then says that the pseudo-orbit is $\varepsilon$-\emph{traced} by $x$).
\end{definition}

Barbieri, Garc\'\i a-Ramos, and the third-named author \cite[Proposition 3.10]{BGL}
proved that every dynamical system with the pseudo-orbit tracing property has the strong topological Markov property.

\subsection{Algebraic dynamical systems}
\label{s:algebraic-actions}
Let $L$ be a locally compact Abelian group. The \emph{Pontryagin dual} of $L$ is the set $\widehat{L}$ of all
continuous group homomorphism $\chi \colon L \to \T = \R/\Z$, called the \emph{characters} of $L$. 
When equipped with the topology  given by uniform convergence on compact sets, $\widehat{L}$ is a locally compact Abelian group
as well. Moreover, if $L$ is compact (resp.\ discrete, resp.\ countable and discrete) then $\widehat{L}$ is discrete 
(resp.\ compact, resp.\ compact metrizable).
The so-called \emph{Pontryagin duality} establishes that the \emph{evaluation map} $\ev_L \colon L \to \widehat{\widehat{L}}$, 
defined by setting $\ev_L(\ell)(\chi) \coloneqq \chi(\ell)$
for all $\ell \in L$ and $\chi \in \widehat{L}$, yields a canonical isomorphism (of locally compact topological groups) of $L$
into its bi-dual $\widehat{\widehat{L}}$. (We refer to \cite{morris} for more on Pontryagin duality.)
\par
Let $G$ be a countable group and let $\Z[G]$ denote the integral group ring of $G$.
Suppose that $G$ acts on $L$ by continuous group automorphisms. 
Then the action of $G$ induces an action on the Pontryagin dual $\widehat{L}$ by continuous group automorphisms and, by linearity,
a left $\Z[G]$-module structure on both $L$ and $\widehat{L}$.
In particular, the evaluation map is $G$-equivariant.
\par
An \emph{algebraic dynamical system} is a pair $(X,G)$, where $X$ is a compact metrizable Abelian topological group
and $G$ is a countable group acting on $X$ by continuous group automorphisms.
\par
Let $M$ be a countable left $\Z[G]$-module. Then, if we equip the Abelian group $M$ with its discrete topology, 
its Pontryagin dual $\widehat{M}$ is a compact metrizable Abelian group and $(\widehat{M},G)$ is an algebraic dynamical system.
In this way, algebraic dynamical systems with acting group $G$ are in one-to-one correspondence with countable left $\Z[G]$-modules.
\par
An algebraic dynamical system $(X,G)$ is said to be \emph{finitely presented} provided its
Pontryagin dual $\widehat{X}$ is a finitely presented left $\Z[G]$-module.
For a ring $R$ we denote by $\M_{n,k}(R)$ the left $R$-module of all $n \times k$ matrices with coefficients in $R$.
If $n = k$ we simply write $\M_n(R)$ instead of $\M_{n,k}(R)$.
Let $k, n \in \N$ and  let $A \in \M_{n,k}(\Z[G])$. Then $\Z[G]^n A$ is a finitely generated left $\Z[G]$-submodule of $\Z[G]^k$.
We denote by $M_A \coloneqq  \Z[G]^k/(\Z[G]^n A)$ the corresponding
finitely presented left $\Z[G]$-module and by $X_A \coloneqq \widehat{M_A}$ its Pontryagin dual.
The algebraic dynamical system $(X_A,G)$ is called the \emph{finitely presented algebraic dynamical system} associated with $A$.
An algebraic dynamical system $(X,G)$ is finitely presented if and only if there exist $k, n \in \N$ and $A \in \M_{n,k}(\Z[G])$
such that $(X,G)$ is \emph{isomorphic} to $(X_A,G)$ (i.e., there exists a continuous $G$-equivariant group isomorphism $X \to X_A$).
\par
An algebraic dynamical system $(X,G)$ is said to be \emph{principal} if $\widehat{X}$ is a cyclic left $\Z[G]$-module whose
annihilator is a principal left ideal of $\Z[G]$. This amounts to saying that $(X,G)$ is isomorphic to $(X_f,G)$ for some $f \in \Z[G]$,
where $(X_f,G)$ is the algebraic dynamical system whose Pontryagin dual is $\Z[G]/\Z[G]f$ (here $k = n = 1$).

\subsection{The Hamming distance}
Let $d$ be a positive integer.
The map $\eta_d \colon \Sym(d) \times \Sym(d) \to [0,1]$ defined by setting
\begin{equation}
\label{e:def-hamming}
\eta_d(\sigma,\sigma') \coloneqq  \frac{1}{d} \vert \{a \in [d]: \sigma(a) \not=  \sigma'(a) \} \vert
\end{equation}
for all $\sigma, \sigma' \in \Sym(d)$, is a bi-invariant metric on $\Sym(d)$, called the \emph{Hamming distance}.

\subsection{Sofic groups}
\label{s:sofic}
Let $G$ be a countable group.
A \emph{sofic approximation} of $G$ is a sequence $\Sigma = (d_j,\sigma_j)_{j \in \N}$,
such that the following conditions are satisfied:
\begin{itemize}
\item
$d_j$ is a positive integer and $\lim_{j \to \infty} d_j = + \infty$;
\item
$\sigma_j \colon G \to \Sym(d_j)$ is a map from $G$ into the symmetric group of $[d_j]$;
\item
$\lim_{j \to \infty} \eta_{d_j}(\sigma_j(s t), \sigma_j(s) \sigma_j(t)) = 0$ for all $s,t \in G$;
\item
$\lim_{j \to \infty} \eta_{d_j}(\sigma_j(s),\sigma_j(s)) = 1$ for all distinct $s, t \in G$,
\end{itemize}
where $\eta_{d_j}$ denotes the Hamming distance on $\Sym(d_j)$ defined by Formula~\eqref{e:def-hamming}.
\par
One says that a (possibly uncountable) group is \emph{sofic} if each of its finitely generated subgroups admits a sofic approximation.

\subsection{Sofic topological entropy}
\label{s:sofic-top-entropy}
In this section we recall the definition of the sofic topological entropy introduced by Kerr and the third-named author
in~\cite[Chapter~10]{kerr-li-book}.
\par
Let  $G$ be a countable sofic group and let $(X,G)$ be a dynamical system.
We fix a metric $\rho$ on $X$ compatible with the topology and a sofic approximation $\Sigma = (d_j,\sigma_j)_{j \in \N}$ of $G$.
\par
Let $d \geq 1$ be an integer. We regard $X^d$ as the set $\{\varphi \colon [d] \to X \}$ of all maps from $[d]$ to $X$.
We equip $X^d$  with the product topology and the diagonal action of $G$
given  by $(g \varphi)(a) \coloneqq g \varphi(a)$ for all $\varphi \in X^d$ and $a \in [d]$.
Define the metrics $\rho_2$ and $\rho_\infty$ on $X^d$ by setting
\begin{align*}
\rho_2(\varphi,\psi) &\coloneqq \left( \frac{1}{d} \sum_{a \in [d]}(\rho (\varphi(a),\psi(a)))^2 \right)^{1/2}  \text{  and} \\
\rho_\infty(\varphi,\psi) &\coloneqq \max_{a \in [d]} \rho(\varphi(a),\psi(a))
\end{align*}
for all $\varphi, \psi \in X^d$.
\par
Given a nonempty finite subset $F \subset   G$, a map $\sigma \colon G \to \Sym(d)$, and a real number $\delta > 0$, we denote by
$\Map(X,\rho,F,\delta,\sigma)$ the set
consisting  of all maps $\varphi \colon [d] \to X$ such that
\[
\rho_2(\varphi \circ \sigma_s, s  \varphi) < \delta \ \mbox{ for all } s \in F,
\]
equivalently,
\begin{equation}
\label{e:map-phi}
\sum_{a \in [d]} \rho(\varphi(\sigma_sa), s\varphi(a))^2 < d \delta^2 \ \mbox{ for all } s \in F.
\end{equation}
\par
Given $\varepsilon > 0$, we denote by $N_\varepsilon(\Map(X,\rho,F,\delta,\sigma), \rho_\infty)$ the maximal cardinality of
a $(\rho_\infty,\varepsilon)$-separated subset $Z \subset   \Map(X,\rho,F,\delta,\sigma)$. Note that
$N_\varepsilon(\Map(X,\rho,F,\delta,\sigma), \rho_\infty) = - \infty$ if $\Map(X,\rho,F,\delta,\sigma) =\varnothing$ and
otherwise it is finite since $X^d$ is compact.
\par
The \emph{sofic topological entropy} of $(X,G)$ with respect to $\Sigma$ is then defined as
\[
h_\Sigma(X,G) \coloneqq \sup_{\varepsilon > 0} \ \inf_{F \in \FF(G)} \ \inf_{\delta > 0} \ \limsup_{j \to \infty} \frac{1}{d_j} \log N_\varepsilon(\Map(X,\rho,F,\delta,\sigma_j), \rho_\infty).
\]
It can be shown that $h_{\Sigma}(X,G) \in \{-\infty\} \cup [0,\infty]$ does not depend on the choice of the metric
$\rho$ for $(X,G)$ (see \cite[Proposition~10.25]{kerr-li-book}).
It follows in particular that $h_\Sigma(X,G)$ is a \emph{topological conjugacy invariant}, i.e.,
if $(Y,G)$ is another dynamical system such that there exists a $G$-equivariant homeomorphism
$f \colon X \to Y$ then $h_\Sigma(X,G) = h_\Sigma(Y,G)$.

The following is a generalization of a result by Keyne and Robertson \cite{KR} for expansive homeomorphisms
(see also \cite[Chapter 16]{denker-grillenberger-sigmund} and, for a uniform version over amenable groups,
\cite[Corollary 4.21]{csc:exp-spec-top-ent-Myhill}).

\begin{lemma} \label{L-entropy at constant}
Let $G$ be a countable sofic group and let $(X,G)$ be an expansive dynamical system.
Fix a compatible metric $\rho$ on $X$ and let $c > 0$ be an expansivity constant for $(X,G)$ relative to $\rho$.
Let $\Sigma = (d_j, \sigma_j)_{j \in \N}$ be a sofic approximation for $G$. Then
\begin{equation}
\label{e:entropy-c}
h_\Sigma(X,G) = \inf_{F \in \cF(G)} \ \inf_{\delta>0} \ \limsup_{j\to \infty}\frac{1}{d_j}\log N_{c/2}(\Map(X,\rho, F, \delta,\sigma_j), \rho_\infty)
< +\infty.
\end{equation}
\end{lemma}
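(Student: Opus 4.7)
The plan is to collapse the supremum over $\varepsilon$ in the definition of $h_\Sigma$ to the single value $\varepsilon = c/2$. Set
\[
h_\Sigma^\varepsilon := \inf_{F \in \cF(G)} \inf_{\delta > 0} \limsup_{j \to \infty} \frac{1}{d_j} \log N_\varepsilon(\Map(X,\rho,F,\delta,\sigma_j), \rho_\infty),
\]
so that $h_\Sigma(X,G) = \sup_{\varepsilon > 0} h_\Sigma^\varepsilon$, and observe that $\varepsilon \mapsto h_\Sigma^\varepsilon$ is monotone non-increasing. The lemma is therefore equivalent to the combination of (a) $h_\Sigma^{c/2} < +\infty$ and (b) $h_\Sigma^\varepsilon \leq h_\Sigma^{c/2}$ for every $0 < \varepsilon < c/2$. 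For (a) one simply invokes compactness of $X$: a finite $(\varepsilon/2)$-net of cardinality $M_{\varepsilon/2} < +\infty$ gives $N_\varepsilon(X^{d_j}, \rho_\infty) \leq M_{\varepsilon/2}^{d_j}$, hence $h_\Sigma^\varepsilon \leq \log M_{\varepsilon/2} < +\infty$. The content is thus (b).

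For (b), I would fix $0 < \varepsilon < c/2$ and apply Lemma~\ref{l:uniform-exp} at scale $\varepsilon/2$ to obtain a finite $K \subseteq G$ containing $1_G$ such that $\rho_K(x,y) \leq c$ implies $\rho(x,y) < \varepsilon/2$. For any $F \supseteq K$ and $\delta > 0$, let $Z \subseteq \Map(X,\rho,F,\delta,\sigma_j)$ be a maximal $(\rho_\infty, \varepsilon)$-separated subset and let $Z' \subseteq Z$ be a maximal $(\rho_\infty, c/2)$-separated subset; by maximality, each $\varphi \in Z$ satisfies $\rho_\infty(\varphi, \varphi') < c/2$ for some $\varphi' = \varphi'(\varphi) \in Z'$. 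The strategy is to bound, uniformly in $\varphi' \in Z'$, the number of such $\varphi \in Z$ by $e^{\tau(\delta) d_j}$ with $\tau(\delta) \to 0$ as $\delta \to 0^+$; then $|Z| \leq |Z'| \cdot e^{\tau(\delta) d_j}$, and taking $\log$, dividing by $d_j$, passing to the $\limsup_j$, and then to the $\inf$ over $F \supseteq K$ and $\delta \to 0^+$ will yield (b).

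The key step is to control the discrepancy set $D_\varphi := \{a \in [d_j] : \rho(\varphi(a), \varphi'(a)) \geq \varepsilon/2\}$. I claim $|D_\varphi| \leq \alpha d_j$ with $\alpha := 128 |K| \delta^2 / c^2$ (so $\alpha \to 0$ as $\delta \to 0$). Indeed, for $a \in D_\varphi$ the contrapositive of the defining property of $K$ produces $s_a \in K$ with $\rho(s_a \varphi(a), s_a \varphi'(a)) > c$; from \eqref{e:map-phi} and Chebyshev's inequality, for each $s \in K$ and $\psi \in \{\varphi, \varphi'\}$ the bad set $B_\psi^s := \{a : \rho(\psi(\sigma_s a), s \psi(a)) \geq c/8\}$ has cardinality at most $64 d_j \delta^2/c^2$. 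If $a \in D_\varphi$ lies outside $B_\varphi^{s_a} \cup B_{\varphi'}^{s_a}$, then since $\sigma_{s_a}(a) \in [d_j]$, the triangle inequality gives
\[
\rho(\varphi(\sigma_{s_a} a), \varphi'(\sigma_{s_a} a)) > c - c/8 - c/8 > c/2,
\]
contradicting $\rho_\infty(\varphi, \varphi') < c/2$. Hence $D_\varphi \subseteq \bigcup_{s \in K}(B_\varphi^s \cup B_{\varphi'}^s)$, and the cardinality bound follows.

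Finally, any two distinct $\varphi_1, \varphi_2 \in Z$ both within $\rho_\infty$-distance $c/2$ of $\varphi'$ must witness their $(\rho_\infty, \varepsilon)$-separation inside $D_{\varphi_1} \cup D_{\varphi_2}$, because outside this union both points lie within $\varepsilon/2$ of $\varphi'$ and hence within $\varepsilon$ of each other. Therefore, for each $D' \subseteq [d_j]$ with $|D'| \leq \alpha d_j$, the restrictions $\varphi|_{D'}$ of those $\varphi$'s with $D_\varphi \subseteq D'$ form a $(\rho_\infty, \varepsilon)$-separated subset of $X^{D'}$ of cardinality at most $M_{\varepsilon/2}^{|D'|}$. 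Summing over $D'$ via the binomial--entropy bound $\sum_{k \leq \alpha d_j} \binom{d_j}{k} \leq e^{H(\alpha) d_j}$ (valid for $\alpha \leq 1/2$, with $H$ the natural-log entropy) yields at most $e^{(H(\alpha) + \alpha \log M_{\varepsilon/2}) d_j}$ such $\varphi$'s, so that $\tau(\delta) := H(\alpha) + \alpha \log M_{\varepsilon/2} \to 0$ as $\delta \to 0$, as required. The main obstacle is precisely that $(\rho_\infty, \varepsilon)$-separation is witnessed at a \emph{single} coordinate, which may fall inside the bad set where the $\rho_2$-approximate equivariance fails pointwise; the packing-style comparison of $Z$ with $Z'$ circumvents this by confining the discrepancy to a combinatorially small set on which the $\varphi$'s can be counted directly.
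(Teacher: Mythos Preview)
Your argument is correct and takes a genuinely different route from the paper's. The paper invokes \cite[Proposition~10.23]{kerr-li-book} to pass to the $\rho_2$-formulation of sofic entropy, and then shows directly that for suitable $F,\delta$ one has $N_\varepsilon(\Map,\rho_2)\leq N_{c/2}(\Map,\rho_\infty)$ with no correction factor: if $\rho_\infty(\varphi,\psi)<c/2$, then on the large ``good'' set $W_\varphi\cap W_\psi$ uniform expansivity forces $\rho(\varphi(a),\psi(a))<\varepsilon/4$, and the small complement contributes negligibly to $\rho_2$, yielding $\rho_2(\varphi,\psi)<\varepsilon/2$. By contrast, you work entirely with $\rho_\infty$ and pay for the bad coordinates combinatorially: you localise the discrepancy set $D_\varphi$ inside the Chebyshev exceptional sets, and then count the fibres over each $\varphi'\in Z'$ via a Stirling/binomial-entropy bound, obtaining $N_\varepsilon(\Map,\rho_\infty)\leq N_{c/2}(\Map,\rho_\infty)\cdot e^{\tau(\delta)d_j}$ with $\tau(\delta)\to 0$. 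Your approach is more self-contained (it does not appeal to the $\rho_2$/$\rho_\infty$ equivalence) and in spirit closer to the counting used later in the proof of Proposition~\ref{p:main}; the paper's approach is shorter once that external equivalence is granted. Two minor points of presentation: in the final count you should group by the \emph{exact} value of $D_\varphi$ (so that each $\varphi$ is counted once) rather than ``summing over $D'$'' with $D_\varphi\subseteq D'$, and your $Z$ should be a $(\rho_\infty,\varepsilon)$-separated set of \emph{maximum} cardinality $N_\varepsilon$ rather than merely maximal.
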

\begin{proof} Clearly $h_\Sigma(X,G)$ is no less than the right hand side in \eqref{e:entropy-c}.
By virtue of \cite[Proposition 10.23]{kerr-li-book}, it suffices to show that for any $\varepsilon>0$ one has
\begin{multline*}
\inf_{F\in \cF(G)} \ \inf_{\delta>0} \ \limsup_{j\to \infty}\frac{1}{d_j}\log N_{\varepsilon}(\Map(X,\rho, F, \delta,\sigma_j), \rho_2)\\
\leq \inf_{F\in \cF(G)} \ \inf_{\delta>0} \ \limsup_{j\to \infty}\frac{1}{d_j}\log N_{c/2}(\Map(X,\rho, F, \delta,\sigma_j), \rho_\infty).
\end{multline*}

Let $\varepsilon > 0$. By Lemma \ref{l:uniform-exp} we can find $K\in \cF(G)$ such that for any $x, y\in X$ with $\rho_K(x, y)\leq c$ one has $\rho(x, y)<\varepsilon/4$.

Let $F\in \cF(G)$ containing $K$ and let $\delta>0$ be small enough so that $\delta<(c/4)^2$ and
$2|K|\delta \diam(X, \rho)^2<(\varepsilon/4)^2$.
It suffices to show that for any $d\in \N$ and any map $\sigma \colon G \to \Sym(d)$ one has
\[
N_{\varepsilon}(\Map(X, \rho, F, \delta,\sigma), \rho_2)\leq N_{c/2}(\Map(X, \rho, F, \delta,\sigma), \rho_\infty).
\]

For each $\varphi\in \Map(X, \rho, F, \delta, \sigma)$ set
\[
W_\varphi \coloneqq \{a\in [d]: \rho(\varphi(\sigma_{s}a), s\varphi(a)) <\delta^{1/2} \mbox{ for all } s\in K\}.
\]
We claim that
\begin{equation}
\label{e:w-phi-big}
\frac{|W_{\varphi}|}{d} \geq 1-|K|\delta.
\end{equation}
In order to prove the claim, for $s \in K$ let us set $W_{\varphi,s} \coloneqq \{a\in [d]: \rho(\varphi(\sigma_sa), s\varphi(a)) < \delta^{1/2}\}$.
By applying the Chebysh\"ev inequality we have
\[
\begin{split}
\vert [d] \setminus W_{\varphi,s} \vert & = \vert \{a\in [d]: \rho(\varphi(\sigma_sa), s\varphi(a)) \geq \delta^{1/2} \} \vert\\
& \leq \frac{1}{\delta} \sum_{a \in  [d] \setminus W_{\varphi,s}} \rho(\varphi(\sigma_sa), s\varphi(a))^2 \\
& \leq \frac{1}{\delta} \sum_{a \in  [d]} \rho(\varphi(\sigma_sa), s\varphi(a))^2 \\
\mbox{(by \eqref{e:map-phi})} \ & \leq d \delta.
\end{split}
\]
We deduce that
\[
\vert [d] \setminus W_{\varphi} \vert  = \vert \bigcup_{s \in K} \left([d] \setminus  W_{\varphi,s}\right) \vert
\leq d |K| \delta,
\]
and the claim follows.

Take a $(\rho_\infty, c/2)$-separated subset $\Phi$ of $\Map(X, \rho, F, \delta, \sigma)$ with
\[
|\Phi| = N_{c/2}(\Map(X, \rho, F, \delta,\sigma), \rho_\infty).
\]
Let $\psi\in \Map(X, \rho, F, \delta, \sigma)$. Then $\rho_\infty(\psi, \varphi)<c/2$ for some $\varphi\in \Phi$.
For every $a\in W_\varphi\cap W_\psi$, we have
\begin{align*}
 \rho(s\varphi(a), s\psi(a))&\leq \rho(s\varphi(a), \varphi(\sigma_{s}a)) + \rho(\varphi(\sigma_{s}a), \psi(\sigma_{s}a)) +
\rho(\psi(\sigma_{s}a), s\psi(a))\\
 & \leq  \delta^{1/2}+c/2+\delta^{1/2}\leq c
 \end{align*}
 for all $s\in K$, and hence $\rho(\varphi(a), \psi(a)) < \varepsilon/4$ by our choice of $K$. Therefore
 \begin{align*}
 \rho_2(\varphi, \psi) & \leq \left(\frac{|W_\varphi\cap W_\psi|}{d}(\frac{\varepsilon}{4})^2 +
(1-\frac{|W_\varphi\cap W_\psi|}{d})\diam(X, \rho)^2\right)^{1/2}\\
 \mbox{(by \eqref{e:w-phi-big})} \ & \leq ((\frac{\varepsilon}{4})^2+2|K|\delta \diam(X, \rho)^2)^{1/2}< \varepsilon/2.
 \end{align*}
Then
\[
N_{\varepsilon}(\Map(X, \rho, F, \delta,\sigma), \rho_2)\leq |\Phi|=N_{c/2}(\Map(X, \rho, F, \delta,\sigma), \rho_\infty),
\]
as desired.
\end{proof}

\section{Proofs}
\begin{proof}[Proof of Proposition \ref{p:main}]
Take a compatible metric $\rho$ for $X$ and let $c>0$ be an expansivity constant for $(X,G)$ relative to $\rho$.
Take a point $x_0\in X\setminus Y$ and set $\varepsilon \coloneqq \min(\rho(x_0, Y), c)/6>0$.
Take a maximal $(\rho, \varepsilon)$-separated subset $Z$ of $X$.
\par
Since $(X,G)$ is expansive and has the strong TMP, by Remark \ref{r:exp-sTMP-usTMP} it has the uniform strong TMP.
Thus we can find $\tau>0$ and $F_1\in \cF(G)$ containing $1_G$ such that for any $A\in \cF(G)$, any
$F_1A$-separated $V\subset   G$, any $x_s\in X$ for $s\in V$, and any $y\in X$ satisfying $\rho_{(F_1A\setminus A)s}(x_s, y)\leq \tau$
for all $s\in V$, there is some $z\in X$ so that $\rho_{F_1As}(x_s, z)\leq \varepsilon$
for all $s\in V$ and $\rho_{G\setminus AV}(y, z)\leq \varepsilon$.
\par
Moreover, by the weak specification property, we can find some symmetric $F_2\in \cF(G)$ containing $1_G$ such that for any $A_1, A_2\subset   G$ satisfying $F_2A_1\cap F_2A_2=\varnothing$ and any $x_1, x_2\in X$ there is some $z\in X$ such that $\rho_{A_1}(x_1, z)\leq
\min(\tau/2, \varepsilon)$ and $\rho_{A_2}(x_2, z)\leq \min(\tau/2, \varepsilon)$. In particular (by taking $A_1 \coloneqq \{1_G\}$ and $A_2 \coloneqq G \setminus F_2^2$), for each $y\in Y$, there is some
$z_y \in X$ such that $\rho(x_0, z_y) \leq \min(\tau/2, \varepsilon)$ and $\rho_{G\setminus F_2^2}(y, z_y) \leq \min(\tau/2, \varepsilon)$.
\par
Let now $K\in \cF(G)$ and $\delta, \kappa>0$.

By Stirling's approximation formula (cf.\ Lemma \ref{l:stirling}), there is some $0<\gamma<1$ such that for any $d\in \Nb$, the number of subsets $D \subset   [d]$ with $|D|\leq \gamma d$ is at most $e^{\kappa d}$.
Take $0<\theta<\min(1/4, \gamma)$ such that
\begin{equation}
\label{e:stirling-z-e-kappa}
|Z|^{2\theta}\leq e^\kappa
\end{equation}
and $4\theta \, \diam(X, \rho)^2\leq (\delta/2)^2$.

Since $c$ is an expansivity constant for $(X,G)$, by Lemma \ref{l:uniform-exp} we can find some $K_1\in \cF(G)$ containing $1_G$ such that for any $x_1, x_2\in X$ with $\rho_{K_1}(x_1, x_2)\leq c$, one has $\rho(x_1, x_2)<\delta/2$.

By compactness of $X$, the continuous action of $G$ is indeed uniformly continuous. As a consequence, we can find $\xi>0$ such that
\begin{equation}
\label{e:rho}
\rho(x, y)\leq \xi \ \Rightarrow \rho_{F_1F_2^2}(x, y)\leq \tau/2 \ \mbox{ and } \ \rho_{K_1}(x, y)\leq \varepsilon
\end{equation}
for all $x, y\in X$.

Set $F \coloneqq F_1F_2^2F_1^{-1}K_1(\{1_G\}\cup K)\in \cF(G)$, and take $\zeta>0$ such that
\begin{equation}
\label{e:zeta}
\zeta\leq \xi^2 \ \mbox{ and } \ \zeta |F|\leq \theta.
\end{equation}

Let $j_0 \in \N$ be large enough so that for all $j \geq j_0$, setting for simplicity $d \coloneqq d_j$ and $\sigma \coloneqq \sigma_j$,
we have that (cf.\ Section \ref{s:sofic})
\begin{equation}
\label{e:U}
\frac{|U|}{d}\geq 1-\theta,
\end{equation}
where $U$ denotes the set of $a\in [d]$ satisfying that $\sigma_sa\neq \sigma_ta$ for all distinct $s, t\in F$ and $\sigma_{st}a = \sigma_s\sigma_ta$ for all $s, t\in F$. Note that $\sigma_{1_G}a = a$ for all $a\in U$, since $1_G \in F$.

For any $\varphi, \psi \colon [d]\rightarrow X$ and any $\Theta\subset   [d]$, we put
$$\rho_{\infty, \Theta}(\varphi, \psi) \coloneqq \max_{a\in \Theta}\rho(\varphi(a), \psi(a)).$$

As, by our assumptions, $h_\Sigma(X,G) \geq 0$, the statement would trivially hold if $h_\Sigma(Y,G) = - \infty$. Thus, we may assume that $\Map(Y,\rho,F,\zeta,\sigma) \neq \varnothing$. For $\varphi\in \Map(Y, \rho, F, \zeta, \sigma)$ we then set
\[
W_\varphi \coloneqq \{a\in [d]: \rho(\varphi(\sigma_sa), s\varphi(a))\leq \zeta^{1/2} \mbox{ for all } s\in F\}.
\]
As in the proof of Lemma \ref{L-entropy at constant}, we have (cf.\ \eqref{e:w-phi-big}) $\frac{|W_{\varphi}|}{d} \geq 1 -|F| \zeta$
so that as $|F|\zeta \leq \theta$ (cf.\ \eqref{e:zeta}),
\begin{equation}
\label{e:w-phi-big-theta}
\frac{|W_{\varphi}|}{d} \geq 1-\theta.
\end{equation}

Take a $(\rho_\infty, c/2)$-separated subset $\Phi$ of $\Map(Y, \rho, F, \zeta, \sigma)$ such that
\begin{equation}
\label{e:phi}
|\Phi|=N_{c/2}(\Map(Y, \rho, F, \zeta, \sigma), \rho_\infty).
\end{equation}

By our choice of $\theta$ and $\gamma$ (cf.\ \eqref{e:stirling-z-e-kappa}) there is some $\Phi_1\subset   \Phi$ such that
\begin{equation}
\label{e:phi-1}
|\Phi_1|\geq |\Phi|e^{-\kappa d}
\end{equation}
and the set $W_\varphi$ is the same for all $\varphi\in \Phi_1$.
Set $W \coloneqq W_\varphi$ for $\varphi\in \Phi_1$, and $U' \coloneqq U\cap W$.
Then, keeping in mind \eqref{e:U} and \eqref{e:w-phi-big-theta}, we have $\frac{|U'|}{d}\geq 1-2\theta$.

Since $Z$ is a maximal $(\rho, \varepsilon)$-separated subset of $X$, for every $\varphi\in \Phi_1$ and $a \in [d]\setminus U'$
we can find an element $f_\varphi(a) \in Z$ such that $\rho(\varphi(a), f_\varphi(a)) < \varepsilon$. This defines a map
$f_\varphi \colon [d]\setminus U'\rightarrow Z$.
Then, there is a set $\Phi_2\subset   \Phi_1$ such that
\[
|\Phi_2| \geq |\Phi_1| \cdot |Z|^{-|[d]\setminus U'|} \geq |\Phi_1|\cdot |Z|^{-2\theta d}\geq |\Phi_1|e^{-\kappa d}
\]
and the map $f_\varphi$ is the same for all $\varphi\in \Phi_2$.
Note that by \eqref{e:phi-1} we have
\begin{equation}
\label{e:phi-2}
|\Phi_2| \geq |\Phi|e^{-2\kappa d}.
\end{equation}

Now, for any distinct $\varphi, \psi\in \Phi_2$, we have $\rho_{\infty, [d]\setminus U'}(\varphi, \psi)< 2\varepsilon\leq c/2$ and $\rho_\infty(\varphi, \psi)\geq c/2$, and hence

\begin{equation}
\label{e:infty-U'-phi-psi}
\rho_{\infty, U'}(\varphi, \psi)\geq c/2.
\end{equation}

Take a maximal subset $\Lambda$ of $U'$ subject to the condition that $\sigma(F_1F_2^2)w_1\cap \sigma(F_1F_2^2)w_2=\varnothing$
for all distinct $w_1, w_2\in \Lambda$. Then $\sigma(F_1F_2^2)^{-1}\sigma(F_1F_2^2)\Lambda\supseteq U'$, and hence (recalling that
$0 < \theta \leq 1/4$)
\begin{equation}
\label{e:lambda}
\frac{|\Lambda|}{d} \geq \frac{|U'|}{|F_1F_2^2|^2d} \geq \frac{1-2\theta}{|F_1F_2^2|^2} \geq \frac{1}{2|F_1F_2^2|^2}.
\end{equation}

Let $B\subset   \Lambda$. Again, since $Z$ is a maximal $(\rho, \varepsilon)$-separated subset of $X$, for every
$\varphi\in \Phi_2$ and $a \in \sigma(F_1F_2^2)B$ we can find an element $f_{B, \varphi}(a) \in Z$ such that
$\rho(\varphi(a), f_{B, \varphi}(a)) < \varepsilon$. This defines a map
$f_{B, \varphi} \colon \sigma(F_1F_2^2)B\rightarrow Z$.
Then there is a subset $\Phi_B \subset   \Phi_2$ such that
\[
|\Phi_B| \geq |\Phi_2| \cdot |Z|^{-|\sigma(F_1F_2^2)B|} = |\Phi_2|\cdot |Z|^{-|F_1F_2^2|\cdot |B|}
\]
and the map $f_{B, \varphi}$ is the same for all $\varphi\in \Phi_B$.
Note that by \eqref{e:phi-2} we have
\begin{equation}
\label{e:phi-B}
|\Phi_B| \geq |\Phi| e^{-2\kappa d} |Z|^{-|F_1F_2^2| \cdot |B|}.
\end{equation}

For any distinct $\varphi, \psi\in \Phi_B$, we have $\rho_{\infty, \sigma(F_1F_2^2)B}(\varphi, \psi)< 2\varepsilon\leq c/2$ and $\rho_{\infty, U'}(\varphi, \psi)\geq c/2$, and hence
\begin{equation}
\label{e:infty-U'-setminus-phi-psi}
\rho_{\infty, U'\setminus \sigma(F_1F_2^2)B}(\varphi, \psi)\geq c/2.
\end{equation}

Let $a\in U'$. Denote by $V_{a}$ the set of all $s\in F$ such that $\sigma_sa\in \Lambda$. For any $s\in V_{a}$, since $F_1F_2^2\subset   F$ and $a\in U$, we have $\sigma(F_1F_2^2s)a=\sigma(F_1F_2^2)\sigma_s a$. Since $a \in U$, for any distinct $s, t$ in $V_{a}$, we have $\sigma_s a\neq \sigma_t a$, and hence
\[
\sigma(F_1F_2^2s)a\cap \sigma(F_1F_2^2t)a=\sigma(F_1F_2^2)\sigma_s a\cap \sigma(F_1F_2^2)\sigma_t a=\varnothing.
\]
It follows that $V_{a}$ is $F_1F_2^2$-separated.

With a subset $B\subset   \Lambda$ and $\varphi\in \Phi_B$ we associate a map $\bar{\varphi}_B \colon [d]\rightarrow X$ defined as follows.

Let $a \in [d]$. Suppose first that $a \in U'$.
Denote by $V_{B, a}$ the set of all $s\in F$ such that $\sigma_s a \in B$. Then $V_{B, a}\subset   V_a$ and hence $V_{B, a}$ is
$F_1F_2^2$-separated.
Let $s \in V_{B, a}$ and consider $\varphi(\sigma_s a) \in Y$.
By our choice of $F_2$, there is a point $z_{\varphi(\sigma_s a)} \in X$ with 
$\rho(x_0, z_{\varphi(\sigma_s a)}) \leq \min(\tau/2, \varepsilon)$ and
$\rho_{G\setminus F_2^2}(z_{\varphi(\sigma_s a)}, \varphi(\sigma_s a)) \leq \min(\tau/2, \varepsilon)$. 
By our choice of $\xi$, since $\rho(\varphi(\sigma_s a), s\varphi(a))\leq \zeta^{1/2}\leq \xi$, we have $\rho_{F_1F_2^2}(\varphi(\sigma_s a), s\varphi(a)) \leq \tau/2$.
Setting $x_{\varphi, a, s} \coloneqq s^{-1}z_{\varphi(\sigma_s a)} \in X$, we have

\begin{align*}
 \rho_{(F_1F_2^2\setminus F_2^2)s}(x_{\varphi, a, s}, \varphi(a))&\leq \rho_{(F_1F_2^2\setminus F_2^2)s}(s^{-1}z_{\varphi(\sigma_s a)}, s^{-1}\varphi(\sigma_s a))+\rho_{(F_1F_2^2\setminus F_2^2)s}(s^{-1}\varphi(\sigma_s a), \varphi(a))\\
 &=\rho_{F_1F_2^2\setminus F_2^2}(z_{\varphi(\sigma_s a)}, \varphi(\sigma_s a))+\rho_{F_1F_2^2\setminus F_2^2}(\varphi(\sigma_s a), s\varphi(a))\\
 &\leq \tau/2+\tau/2=\tau.
 \end{align*}
By our choice of $F_1$ and $\tau$ (coming from the uniform sTMP of $(X,G)$), there is some $\bar{\varphi}_B(a)\in X$ such that
\begin{equation}
\label{e:x-phi-a-s:bar-phi-B}
\rho_{F_1F_2^2s}(x_{\varphi, a, s}, \bar{\varphi}_B(a))\leq \varepsilon \ \mbox{ for all } \ s\in V_{B, a}
\end{equation}
and
\begin{equation}
\label{e:G-minus-f-2-V-b-a}
\rho_{G\setminus F_2^2V_{B, a}}(\varphi(a), \bar{\varphi}_B(a)) \leq \varepsilon.
\end{equation}

For $a\in [d]\setminus U'$, we set $\bar{\varphi}_B(a) \coloneqq x_0$.\\

\noindent
{\bf Claim 1.} $\bar{\varphi}_B\in \Map(X, \rho, K, \delta, \sigma)$.
\begin{proof}[Proof of Claim 1.]
Let $t\in K$, let $a\in U'\cap \sigma_t^{-1}U'$, and let $g\in K_1$.

Suppose $gt\in F_1F_2^2V_{B, a}$, say $gt=hs$ with $h\in F_1F_2^2$ and $s\in V_{B, a}$. Then $h^{-1}g\in F_2^2F_1^{-1}K_1 \subset   F$ and
$\sigma_{h^{-1}g}\sigma_ta=\sigma_{h^{-1}gt}a=\sigma_s a\in B$,  and hence $h^{-1}g\in V_{B, \sigma_t a}$. In this case we have
\begin{equation}
\label{e:gx}
gx_{\varphi, \sigma_t a, h^{-1}g} = g(h^{-1}g)^{-1}z_{\varphi(\sigma_{h^{-1}g}\sigma_t a)} =
hz_{\varphi(\sigma_{h^{-1}g}\sigma_t a)} = gts^{-1}z_{\varphi(\sigma_s  a)} = gtx_{\varphi, a, s},
\end{equation}
and hence using $g = h(h^{-1}g) \in F_1F_2^2 V_{B, \sigma_t a}$, we have
\begin{align*}
\rho(g\bar{\varphi}_B(\sigma_t a), gt\bar{\varphi}_B(a))
& \leq \rho(g\bar{\varphi}_B(\sigma_t a), gx_{\varphi, \sigma_t a, h^{-1}g})+\rho(gx_{\varphi, \sigma_t a, h^{-1}g}, gtx_{\varphi,  a, s})\\
& \hspace{5cm}+\rho(gtx_{\varphi, a, s}, gt\bar{\varphi}_B(a))\\
\mbox{(by \eqref{e:gx})} \ & = \rho(g\bar{\varphi}_B(\sigma_t a), gx_{\varphi, \sigma_t a, h^{-1}g})+\rho(gtx_{\varphi, a, s}, gt\bar{\varphi}_B(a))\\
\mbox{(as $gt=hs$)} \ & = \rho(g\bar{\varphi}_B(\sigma_t a), gx_{\varphi, \sigma_t a, h^{-1}g})+ 
\rho(hsx_{\varphi,a,s}, hs \bar{\varphi}_B(a))\\
\mbox{(by \eqref{e:x-phi-a-s:bar-phi-B})} \ & \leq \varepsilon + \varepsilon \leq c.
 \end{align*}

Suppose $g\in F_1F_2^2V_{B, \sigma_t a}$, say $g=h_1s_1$ with $h_1\in F_1F_2^2$ and $s_1\in V_{B, \sigma_t a}$. Then
$h_1^{-1}gt \in F_2^2 F_1^{-1}K_1 K \subset   F$ and $\sigma_{h_1^{-1}gt}a = \sigma_{h_1^{-1}g}\sigma_t a = \sigma_{s_1}\sigma_t a\in B$, and hence $h_1^{-1}gt\in V_{B, a}$.
In this case we have $gt=h_1(h_1^{-1}gt)\in F_1F_2^2V_{B, a}$, and hence, by the above, we get
$\rho(g\bar{\varphi}_B(\sigma_t a), gt\bar{\varphi}_B(a)) \leq c$.

Finally, suppose that $gt \not\in F_1F_2^2V_{B, a}$ and $g \not\in F_1F_2^2V_{B, \sigma_t a}$.
Since $\rho(\varphi(\sigma_t a), t\varphi(a))\leq \zeta^{1/2}\leq \xi$, and recalling that $g \in K_1$, we have
$\rho(g\varphi(\sigma_t a), gt\varphi(a))\leq \varepsilon$ by \eqref{e:rho}. As a consequence,
\begin{align*}
 \rho(g\bar{\varphi}_B(\sigma_t a), gt\bar{\varphi}_B(a))
 & \leq \rho(g\bar{\varphi}_B(\sigma_t a), g\varphi(\sigma_t a)) + \rho(g\varphi(\sigma_t a), gt\varphi(a))+\rho(gt\varphi(a), gt\bar{\varphi}_B(a))\\
 \mbox{(by \eqref{e:G-minus-f-2-V-b-a})} \ &\leq \varepsilon+ \varepsilon+\varepsilon\leq c.
 \end{align*}

 We conclude that $\rho_{K_1}(\bar{\varphi}_B(\sigma_t a), t\bar{\varphi}_B(a))\leq c$, and hence $\rho(\bar{\varphi}_B(\sigma_t a), t\bar{\varphi}_B(a))<\delta/2$ by our choice of $K_1$. Now we get
\[
\begin{split}
 \rho_2(\bar{\varphi}_B\circ \sigma_t, t\bar{\varphi}_B) & \leq ((\frac{\delta}{2})^2+(1-\frac{|U'\cap \sigma_t^{-1} U'|}{d}) \,\diam(X, \rho)^2)^{1/2}\\
& \leq ((\frac{\delta}{2})^2+4\theta \,\diam(X, \rho)^2)^{1/2}\\
& \leq \delta.
 \end{split}
\]
This proves our claim.
\end{proof}

Let $B\subset   \Lambda$ and $\varphi\in \Phi_B$.
For any $a\in B$, we have $1_G \in V_{B, a}$, and hence
\begin{equation}
\label{e:rho-bar-phi-B-x-0}
\rho(\bar{\varphi}_B(a), x_0)\leq \rho(\bar{\varphi}_B(a), x_{\varphi, a, 1_G}) + \rho(x_{\varphi, a, 1_G}, x_0) \leq \varepsilon +
\rho(z_{\varphi(a)}, x_0) \leq \varepsilon + \varepsilon =2\varepsilon.
\end{equation}
For any $a\in U'\setminus \sigma(F_1F_2^2)B$, we have $1_G \not \in F_1F_2^2V_{B, a}$, and hence
\begin{equation}
\label{e:rho-bar-phi-B-a}
 \rho(\bar{\varphi}_B(a), \varphi(a))\leq \varepsilon.
\end{equation}
In particular, \eqref{e:rho-bar-phi-B-a} holds for all $a \in \Lambda \setminus B \subset   U'\setminus \sigma(F_1F_2^2)B$.

Let us set $\overline{\Phi} \coloneqq \{\bar{\varphi}_B: B\subset   \Lambda \mbox{ and } \varphi\in \Phi_B\} \subset   \Map(X, \rho, K, \delta, \sigma)$.\\

\noindent
{\bf Claim 2.} {\it $\overline{\Phi}$ is $(\rho_\infty, \varepsilon)$-separated.}

\begin{proof}[Proof of Claim 2.]
For any distinct $B_1, B_2\subset   \Lambda$ and $\varphi\in \Phi_{B_1}, \psi\in \Phi_{B_2}$, we have for, say, $a\in B_1\setminus B_2$,
\begin{align*}
\rho_\infty(\bar{\varphi}_{B_1}, \bar{\psi}_{B_2}) & \geq \rho(\bar{\varphi}_{B_1}(a), \bar{\psi}_{B_2}(a))\\
& \geq \rho(x_0, \psi(a)) - \rho(\bar{\varphi}_{B_1}(a), x_0) - \rho(\bar{\psi}_{B_2}(a), \psi(a))\\
\mbox{(by \eqref{e:rho-bar-phi-B-x-0}, \eqref{e:rho-bar-phi-B-a}, and $\varepsilon \leq \rho(x_0,Y)/6$)} \   & \geq 4\varepsilon - 2\varepsilon -\varepsilon =\varepsilon.
 \end{align*}

For any $B\subset   \Lambda$ and any distinct $\varphi, \psi\in \Phi_B$, by \eqref{e:infty-U'-setminus-phi-psi} we have $\rho(\varphi(a), \psi(a)) \geq c/2$ for some
$a\in U'\setminus \sigma(F_1F_2^2)B$, and hence
\begin{align*}
\rho_\infty(\bar{\varphi}_B, \bar{\psi}_B) & \geq \rho(\bar{\varphi}_B(a), \bar{\psi}_B(a))\\
 & \geq \rho(\varphi(a), \psi(a)) - \rho(\varphi(a),\bar{\varphi}_B(a)) - \rho(\psi(a), \bar{\psi}_B(a))\\
\mbox{(by \eqref{e:rho-bar-phi-B-a})} \ & \geq c/2 - \varepsilon - \varepsilon \geq \varepsilon.
 \end{align*}
 This proves our claim.
\end{proof}

We need the following elementary fact. Given a finite set $\Lambda$ and a variable $t$, we have
\begin{equation}
\label{e:binom-lambda}
\sum_{B\subset   \Lambda} t^{|B|} = \sum_{j=0}^{|\Lambda|} {{|\Lambda|}\choose{j}} t^{j} = (1+t)^{|\Lambda|}.
\end{equation}

We are now in a position to prove the entropic inequality and conclude the proof of the proposition.

\begin{align*}
 N_\varepsilon(\Map(X, \rho, K, \delta, \sigma), \rho_\infty) & \geq \vert \overline{\Phi} \vert \geq \sum_{B\subset   \Lambda}|\Phi_B|\\
\mbox{(by \eqref{e:phi-B})}  & \geq \sum_{B\subset   \Lambda} |\Phi|e^{-2\kappa d} |Z|^{-|F_1F_2^2|\cdot |B|}\\
\mbox{(by \eqref{e:phi} and \eqref{e:binom-lambda})} \ & = N_{c/2}(\Map(Y, \rho, F, \zeta, \sigma), \rho_\infty)e^{-2\kappa d}(1+|Z|^{-|F_1F_2^2|})^{|\Lambda|}\\
 \mbox{(by \eqref{e:lambda})} & \geq N_{c/2}(\Map(Y, \rho, F, \zeta, \sigma), \rho_\infty)e^{-2\kappa d}
(1+|Z|^{-|F_1F_2^2|})^{d/(2|F_1F_2^2|^2)}.
 \end{align*}
It follows that
\begin{align*}
& \limsup_{i\to \infty}\frac{1}{d_i}\log N_\varepsilon(\Map(X, \rho, K, \delta, \sigma_i), \rho_\infty)\\
&\geq \limsup_{i\to \infty}\frac{1}{d_i}\log N_{c/2}(\Map(Y, \rho, F, \zeta, \sigma_i), \rho_\infty)-2\kappa+\frac{1}{2|F_1F_2^2|^2}\log (1+|Z|^{-|F_1F_2^2|})\\
&\geq h_\Sigma(Y,G)-2\kappa+\frac{1}{2|F_1F_2^2|^2}\log (1+|Z|^{-|F_1F_2^2|}),
\end{align*}
where in the 2nd inequality we apply Lemma~\ref{L-entropy at constant} to $(Y,G)$. Letting $\kappa\to 0$, we obtain
$$\limsup_{i\to \infty}\frac{1}{d_i}\log N_\varepsilon(\Map(X, \rho, K, \delta, \sigma_i), \rho_\infty)\geq h_\Sigma(Y,G)+\frac{1}{2|F_1F_2^2|^2}\log (1+|Z|^{-|F_1F_2^2|}).$$
Taking the infimum over $K$ and $\delta$, we get
$$ h_\Sigma(X,G)\geq h_\Sigma(Y,G)+\frac{1}{2|F_1F_2^2|^2}\log (1+|Z|^{-|F_1F_2^2|}) > h_\Sigma(Y,G).$$
\end{proof}

\begin{proof}[Proof of Theorem \ref{t:main}]
Let $f \colon X \to X$ be an injective endomorphism of the dynamical system $(X,G)$.
By continuity of $f$ and since $X$ is compact and Hausdorff, $f$ yields a homeomorphism between $X$ and $Y \coloneqq f(X)$.
Moreover, since $f$ is $G$-equivariant, $Y \subset   X$ is $G$-invariant.
It follows that $f$ in fact establishes a conjugacy between the dynamical systems $(X,G)$ and $(Y,G)$ so that
$h_\Sigma(X,G) = h_\Sigma(Y,G)$.
By Proposition \ref{p:main}, $Y$ cannot be a proper subset of $X$. Thus $f(X) = Y = X$, that is, $f$ is surjective.
This shows that $(X,G)$ is surjunctive.
\end{proof}

\begin{proof}[Proof of Corollary \ref{c:alg-finitely-presented}]
First of all, Deninger and Schmidt \cite[Theorem 3.2]{DS} proved that a principal algebraic dynamical system $(X_f,G)$ is expansive if and only if $f$ is invertible in $\ell^1(G)$ and Ren \cite[Theorem 1.2]{Ren} proved that every expansive principal algebraic action has the weak specification property. The proofs of the analogous statements for $A \in \M_n(\Z[G])$, $n \geq 1$, follow the same lines.
Secondly, Meyerovitch \cite[Theorem 3.4]{Meyerovitch} proved that the finitely-presented dynamical system $(X_A,G)$ associated with an element $A \in \M_{n}(\Z[G])$ which is invertible in $\M_n(\ell^1(G))$ has the pseudo-orbit tracing property.
Finally, every dynamical system with the POTP has the sTMP \cite[Proposition 3.11]{BGL}.
The statement then follows from Theorem \ref{t:main}.
\end{proof}

\begin{proof}[Proof of Theorem \ref{T-CPE to surjunctive}]
It is a result of Deninger that for any action of a countably infinite amenable group $G$ on a compact metrizable group $X$ by continuous automorphisms, the topological entropy is equal to the measure entropy $h_{\mu_X}(X, G)$ of the normalized Haar measure $\mu_X$ 
\cite[Theorem 2.2]{Deninger} (though Deninger assumed $X$ to be abelian, the proof there does not use this assumption). 
It was shown in \cite[Corollaries 7.5 and 8.4]{chung-li-2015} that such an action has CPE if and only if it has CPE with respect to $\mu_X$, i.e., every nontrivial factor of the probability-measure-preserving action $G\curvearrowright (X,\mu_X)$ has positive entropy. 
Furthermore, it was shown in \cite[Theorem 8.6]{chung-li-2015} that such an action with finite measure entropy of $\mu_X$ has CPE with respect to $\mu_X$ if and only if, for every $G$-invariant Borel probability measure $\nu$ of $X$ not equal to $\mu_X$, one has 
$h_{\nu}(X,G) < h_{\mu_X}(X,G)$. 
\par
Let now $f \colon X \to X$ by an injective endomorphism of $(X,G)$.
We have $h_{f_*\mu_X}(X,G) = h_{f_*\mu_X}(f(X),G) = h_{\mu_X}(X,G)$ so that $f_*\mu_X = \mu_X$. 
Since $\mu_X$ has full support, we deduce that $f(X)=X$, that is, $f$ is surjective.
This shows that the dynamical system $(X,G)$ is surjucntive.
\end{proof}

\section{Examples}
\begin{example}[The hard-ball model]
Take $G = \Z$ and $A = \{0,1\}$.
Then the  subset $X \subset   A^G$, consisting of all $x \colon \Z \to \{0,1\}$ such that
$(x(n),x(n + 1)) \not= (1,1)$ for all $n \in \Z$, is a subshift.
This subshift is called the \emph{golden mean subshift}.
\par
More generally, take $G =\Z^d$ and $A = \{0,1\}$.
Then the  subset $X \subset   A^G$, consisting of all $x \in A^G$ such that
$(x(g),x(g + e_i)) \not= (1,1)$ for all $g \in G$, where $(e_i)_{1\leq i \leq d}$ is the canonical basis of $\Z^d$,
is a subshift.
This subshift is called the \emph{hard-ball model}.
\par
The hard-ball model is strongly irreducible and of finite type (and therefore splicable). To see this it suffices to convince ourselves that
the set $\Delta \coloneqq \{g = (g_1, \ldots, g_d) \in G: |g_i| \leq 1, i=1,\ldots,d\}$ witnesses both strong irreducibility
and the finite type condition.
\par
In \cite[Example 6.6]{BGL} the following generalization is presented. Let $G$ be a group,
let $F \subset   G$ be a finite nonempty subset not containing $1_G$.
The $F$-{hard-ball model} of $G$ is the subshift $X_{{\small{\rm hard}},F} \subset   \{0,1\}^G$ defined by setting
\[
X_{{\small{\rm hard}},F} \coloneqq \{x \in \{0,1\}^G: x(g)x(gs) = 0 \mbox{ for all } g \in G \mbox{ and } s \in F\}.
\]
Then $X_{{\small{\rm hard}},F}$ is a strongly irreducible subshift of finite type, and therefore has the POTP and thus the sTMP, equivalently the usTMP (cf.\ \cite[Proposition 3.11]{BGL} and Proposition \ref{p:sTMP}).
When $G$ is sofic, given any sofic approximation $\Sigma$ for $G$, one has $h_\Sigma(X_{{\small{\rm hard}},F},G) \geq 0$, since
the action has fixed points (in fact, only one, namely the constant $0$-configuration).
\end{example}

\begin{example}[Weiss' example]
\label{ex:weiss}
Benjy Weiss \cite{weiss-sgds} found the following simple example of a shift of finite type that is not surjunctive.
Let $A = \{0,1,2\}$ and $G = Z$. Consider the subshift $X \subset   A^G$ of finite type with defining set of admissible patterns
${\mathcal A} = \{00, 01, 11, 12, 22\} \subset   A^2 = A^\Omega$, where $\Omega = \{0,1\}\subset   G$ (cf.\ Section \ref{s:shifts}).
Thus any point $x$ in $X$ has at most one block of $1$'s, which, if finite is bordered by an infinite string of $0$'s to the left
and $2$'s to the right. Set $S \coloneqq \{-1,0\} \subset   G$ and define $\mu \colon A^S = A^2 \to A$ by setting
\[
\mu(s,t) =
\begin{cases} t & \mbox{ if } (s,t) \neq (1,2)\\
              1 &  \mbox{ if } (s,t) = (1,2).
\end{cases}																								
\]
Then the associated cellular automaton $f \colon A^G \to A^G$ leaves $X$ invariant (and therefore is an
endomorphism of $(X,G)$) and works as follows: if the finite block of $1$'s occurs in $x \in X$,
then in $f(x)$ the same block is elongated by one extra $1$ on the right.
Thus $f$ is injective. Clearly, $f$ is not surjective since a configuration with a single $1$ will be in $X \setminus f(X)$.
As a consequence $(X,G)$ is not surjunctive.
\end{example}

\appendix
\section{Stirling's approximation formula}
\label{app}
The following old result was widely used, without proof, in \cite{kerr-li}.
For the reader's convenience and the sake of completeness, we include its detailed proof.
\par
Given a real number $\alpha$ we denote by $\floor{\alpha} \in \Z$ the greatest integer less than or equal to $\alpha$.

\begin{lemma}
\label{l:stirling}
Let $0 < \gamma < 1/2$. Then there exists $\kappa = \kappa(\gamma) > 0$ and $d_0 = d_0(\gamma) \in \N$ with
\begin{equation}
\label{e:stirling-2}
\lim_{\gamma \to 0} \kappa(\gamma) = 0
\end{equation}
such that
\begin{equation}
\label{e:stirling}
\sum_{j=0}^{\floor{\gamma d}} {{d}\choose{j}} \leq e^{\kappa d}
 \end{equation}
for all $d \in \N$ such that $d \geq d_0$.
\end{lemma}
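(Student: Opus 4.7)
The plan is to establish the standard entropy upper bound on partial sums of binomial coefficients, which gives the stronger quantitative statement $\sum_{j=0}^{\floor{\gamma d}}\binom{d}{j}\leq e^{dH_e(\gamma)}$, where $H_e(t)\coloneqq -t\log t-(1-t)\log(1-t)$ is the natural binary entropy. Setting $\kappa(\gamma)\coloneqq H_e(\gamma)$ then yields the lemma, because $H_e(\gamma)\to 0$ as $\gamma\to 0$.

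The key trick is probabilistic. Set $k\coloneqq \floor{\gamma d}$ and $p\coloneqq k/d$. For the nontrivial case I would assume $d\geq d_0\coloneqq \lceil 1/\gamma\rceil$, so that $k\geq 1$ and $0<p\leq\gamma<1/2$ (the case $k=0$ gives the sum equal to $1$, which is trivially bounded by any $e^{\kappa d}$). Expanding $1=(p+(1-p))^d$ via the binomial theorem gives
\begin{equation*}
1\;=\;\sum_{j=0}^{d}\binom{d}{j}p^{j}(1-p)^{d-j}\;\geq\;\sum_{j=0}^{k}\binom{d}{j}p^{j}(1-p)^{d-j}.
\end{equation*}
Because $p\leq 1/2$, the ratio $p/(1-p)\leq 1$, so $j\mapsto p^{j}(1-p)^{d-j}=(1-p)^{d}\bigl(p/(1-p)\bigr)^{j}$ is non-increasing in $j$. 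Thus for every $0\leq j\leq k$ one has $p^{j}(1-p)^{d-j}\geq p^{k}(1-p)^{d-k}$, and the displayed inequality becomes
\begin{equation*}
1\;\geq\;p^{k}(1-p)^{d-k}\sum_{j=0}^{k}\binom{d}{j},
\end{equation*}
i.e., $\sum_{j=0}^{k}\binom{d}{j}\leq p^{-k}(1-p)^{-(d-k)}=\exp(dH_e(p))$ (using $k=pd$).

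To conclude, observe that $H_e$ is strictly increasing on $[0,1/2]$ and $p\leq\gamma$, so $H_e(p)\leq H_e(\gamma)$. Hence
\begin{equation*}
\sum_{j=0}^{\floor{\gamma d}}\binom{d}{j}\;\leq\;\exp\bigl(d\,H_e(\gamma)\bigr)\;=\;e^{\kappa(\gamma)\,d},
\end{equation*}
and $\kappa(\gamma)=H_e(\gamma)=-\gamma\log\gamma-(1-\gamma)\log(1-\gamma)\to 0$ as $\gamma\to 0^+$. There is no genuine obstacle in this proof: the monotonicity of $j\mapsto p^j(1-p)^{d-j}$ is immediate from $p\leq 1-p$, and the only bookkeeping required is to separate out the trivial edge case $\floor{\gamma d}=0$ by choosing $d_0=\lceil 1/\gamma\rceil$.
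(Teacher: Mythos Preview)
Your proof is correct. The probabilistic trick (bounding the lower tail of a Bernoulli$(p)$ sum with $p=k/d$) is watertight, and the monotonicity step and the use of $H_e(p)\le H_e(\gamma)$ are both justified exactly as you say.

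The paper takes a different route: it bounds the partial sum by $(\lfloor\gamma d\rfloor+1)\binom{d}{\lfloor\gamma d\rfloor}$ and then controls the single binomial coefficient via the explicit Stirling-type inequalities $e(m/e)^m\le m!\le em(m/e)^m$, arriving at $\kappa(\gamma)=2H_e(\gamma)$ after absorbing the polynomial prefactor $d^3$ into the extra factor of $2$ in the exponent. Your argument is both shorter and sharper: it avoids Stirling's formula entirely, yields the optimal constant $\kappa(\gamma)=H_e(\gamma)$, and in fact gives the inequality for every $d$ with $\lfloor\gamma d\rfloor\ge 1$ rather than only for $d$ large enough that a cubic is dominated by an exponential. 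The paper's approach has the minor advantage of being a direct factorial computation, but for the purposes of the lemma your method is strictly preferable.
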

\begin{proof}
Let $d \geq 2/\gamma$ so that, on the one hand
\begin{equation}
\label{e:correction}
\floor{\gamma d + 1} = \floor{\gamma d} + 1 \leq 2 \floor{\gamma d},
\end{equation}
and, on the other hand, since $\gamma < 1/2$, one has $d \geq 4$ and therefore $d - \gamma d \geq 2$.

As $\gamma < 1/2$, we clearly have
\[
\sum_{j=0}^{\floor{\gamma d}} {{d}\choose{j}} \leq (\floor{\gamma d} + 1) {{d}\choose{\floor{\gamma d}}},
\]
so that, by virtue of \eqref{e:correction}, we only need to find  $\kappa = \kappa(\gamma)$ satisfying \eqref{e:stirling-2} and such that
\begin{equation}
\label{e:to-prove}
2\floor{\gamma d} {{d}\choose{\floor{\gamma d}}} \leq e^{\kappa d}.
\end{equation}

Setting $\kappa \coloneqq -2(\gamma \ln \gamma + (1-\gamma)\ln(1-\gamma)) > 0$, we clearly have that \eqref{e:stirling-2} holds. Consider the function $G(x) = e^{\kappa x/2} - x^3$. It is easy to see that there exists an integer $d_0\ge 2/\gamma$ such that $G(x) > 0$
for all $x \ge d_0$.

In \cite[Lemma 10.1]{kerr-li-book} the following version of Stirling's classical formula is proved by purely elementary means:
\begin{equation}
\label{e:stirling-KL}
e \left(\frac{m}{e}\right)^m \leq m! \leq em\left(\frac{m}{e}\right)^m
\end{equation}
for all $m \in \N$ such that $m \geq 1$.

Let $d \geq d_0$. Then, setting $m \coloneqq \floor{\gamma d}$, we deduce that
\[
\begin{split}
2 \floor{\gamma d} {{d}\choose{\floor{\gamma d}}} & = 2m {{d}\choose{m}} = 2m \cdot \frac{d!}{m!(d-m)!} \\
& \leq 2m \cdot \frac{e d \left(\frac{d}{e}\right)^d}{e \left(\frac{m}{e}\right)^m \cdot e \left(\frac{d-m}{e}\right)^{d-m}}\\
& = 2m \cdot \frac{d}{e} \cdot d^d \cdot \frac{1}{m^m} \cdot \frac{1}{(d-m)^{d-m}}\\
\mbox{(since $\gamma d - 1\leq m \leq \gamma d$)} \ & \leq 2 \gamma d \cdot \frac{d}{e} \cdot d^d \cdot \frac{1}{(\gamma d -1)^{\gamma d -1}} \cdot \frac{1}{((1-\gamma)d)^{(1-\gamma)d}}\\
& = 2 \gamma d \cdot \frac{d}{e} \cdot d^d \cdot \gamma d \cdot
\left(\frac{\gamma d}{\gamma d -1}\right)^{\gamma d -1} \\
& \ \ \ \ \ \ \ \ \ \ \ \ \ \ \ \ \ \  \cdot \frac{1}{d^d} \cdot \frac{1}{\gamma^{\gamma d}} \cdot \frac{1}{(1-\gamma)^{(1-\gamma)d}}\\
\mbox{(since $(\gamma d/(\gamma d -1))^{\gamma d -1} \leq e$)} \  & \leq 2 d^3 \gamma^2 \cdot \frac{1}{\gamma^{\gamma d}} \cdot \frac{1}{(1-\gamma)^{(1-\gamma)d}}\\
\mbox{(since $\gamma < 1/2$)} \ & < d^3 \cdot  \left(\frac{1}{\gamma^\gamma} \cdot \frac{1}{(1-\gamma)^{(1-\gamma)}}\right)^d\\
& = d^3 \cdot e^{-(\gamma \ln \gamma + (1-\gamma)\ln(1-\gamma))d}\\
\mbox{(
as $d \geq d_0$)} \  & \leq e^{\kappa d}.
\end{split}
\]
This proves \eqref{e:to-prove} and the proof is complete.
\end{proof}


\end{document}